\newtheorem{theorem}{Theorem}[section]
\newtheorem{lemma}[theorem]{Lemma}
\newtheorem{question}[theorem]{Question}
\newtheorem{definition}[theorem]{Definition}
\newcommand{\id}{\textup{id}}
\newcommand{\nd}{\textup{dim}_{\textup{nuc}}}
\newcommand{\la}{\langle}
\newcommand{\ra}{\rangle}
\newcommand{\Z}{\mathbb{Z}}
\newcommand{\C}{\mathbb{C}}
\newcommand{\N}{\mathbb{N}}
\newcommand{\T}{\mathbb{T}}
\newcommand{\mf}{\mathcal{F}}
\newcommand{\mh}{\mathcal{H}}
\numberwithin{equation}{section}
\title[Nuclear dimension of finitely-generated nilpotent group C*-algebras]{Finitely Generated Nilpotent group C*-algebras have finite nuclear dimension}
\address{Department of Mathematics, Miami University, Oxford, OH, 45056}
\author{Caleb Eckhardt}
\email{eckharc@miamioh.edu}
\author{Paul McKenney}
\email{mckennp2@miamioh.edu}
\thanks{C.E.\ was partially supported by NSF grant DMS-1262106.}
\date{}
\begin{document}
\maketitle
\begin{abstract}
We show that group C*-algebras of  finitely generated, nilpotent groups have finite nuclear dimension.
It then follows, from a string of deep results, that the C*-algebra $A$ generated by an irreducible representation of
such a group has decomposition rank at most 3.  If,  in addition, $A$ satisfies the universal coefficient theorem,
another string of deep results shows it is classifiable by its ordered K-theory and is approximately subhomogeneous.  We
observe that all C*-algebras generated by faithful irreducible representations of finitely generated, torsion free
nilpotent groups satisfy the universal coefficient theorem.
\end{abstract}
\section{Introduction}
The noncommutative dimension theories of Kirchberg and Winter (decomposition rank) and of Winter and Zacharias (nuclear
dimension) play a prominent role in the theory of nuclear C*-algebras.  This is especially apparent in Elliott's
classification program where finite noncommutative dimension is essential for a satisfying classification theory.  In
\cite{Winter10}, Winter and Zacharias express a hope that nuclear dimension will ``shed new light on the role of
dimension type conditions in other areas of noncommutative geometry." We share this hope and this work aims to use the
theory of nuclear dimension to shed new light on the representation theory of discrete nilpotent groups.  

A \emph{discrete} group is Type I (and therefore has a ``tractable" representation theory) if and only if it has an
abelian subgroup of finite index \cite{Thoma64}.  Therefore being Type I is a highly restrictive condition  for discrete
groups and therefore for most discrete groups, leaves many of the tools of classic representation theory out of reach.
Recent breakthroughs of several mathematicians (H. Lin, Z. Niu, H. Matui, Y. Sato and W. Winter to name a few) gave
birth to the possibility of classifying the C*-algebras generated by the irreducible representations of nilpotent groups
by their ordered K-theory. A key missing ingredient was knowing whether or not the group C*-algebras of finitely
generated nilpotent groups have finite nuclear dimension. Our main result (Theorem \ref{thm:mainthm}) supplies this
ingredient. In particular we show that the nuclear dimension of $C^*(G)$ is bounded by $10^{h(G)-1}\cdot h(G)!$ where
$h(G)$ is the Hirsch number of $G$ (see Section \ref{sec:ngfacts}). 

Each finitely generated nilpotent group has an algebraic ``basis" of sorts and the Hirsch number returns the size of
this basis--it is therefore not surprising to see $h(G)$ appear in the nuclear dimension estimate.

Fix a finitely generated nilpotent group $G$ and an irreducible representation $\pi$ of $G.$  The C*-algebra generated
by $\pi(G)$ is simple, nuclear, quasidiagonal with unique trace and, by a combination of Theorem \ref{thm:mainthm} with
many deep results (see Theorem \ref{thm:bigol} for a complete list), has finite decomposition rank.  Therefore if
$C^*(\pi(G))$ satisfies the universal coefficient theorem (see \cite{Rosenberg87}),  it is classified by its ordered
K-theory and isomorphic to an approximately subhomogeneous C*-algebra by \cite{Lin08,Matui12,Matui14a} (see
\cite[Corollary 6.2]{Matui14a}). We observe in Section \ref{sec:manicuredhands} that the work of  Rosenberg and Schochet
\cite{Rosenberg87} show that if $G$ is torsion free and $\pi$ is faithful (as a group homomorphism on $G$) then
$C^*(\pi(G))$ satisfies the universal coefficient theorem.

If $G$ is a two-step nilpotent group, it is well-known that the C*-algebras generated by irreducible representations of
$G$ are either finite dimensional or A$\T$-algebras.  Indeed Phillips showed in \cite{Phillips06} that all simple higher
dimensional non commutative tori (a class of C*-algebras that include $C^*(\pi(G))$ when $G$ is two-step and $\pi$ is an
irreducible, infinite dimensional representation) are A$\T$ algebras. In some sense this result forms the base case for
our induction proof (see below for a more detailed description).  Peeling back a couple layers, we mention that
Phillips' work relies on that of Elliott and Evans \cite{Elliott93} and Kishimoto \cite{Kishimoto98} (see also
\cite{Boca97} and \cite{LinQ96} for precursors to Phillips' Theorem).

Since in general a discrete nilpotent group $G$ is not Type I we are left with essentially no possibility of reasonably
classifying its irreducible representations up to unitary equivalence.  On the other hand if every primitive quotient of
$C^*(G)$ satisfies the UCT, then one could classify the C*-algebras generated by these representations by their ordered
K-theory.  This provides a dual viewpoint to the prevailing one of parametrizing irreducible representations by
primitive ideals of $C^*(G)$ or by the space of characters of $G$ (see \cite{Kaniuth06})--we thank Nate Brown for
sharing this nice observation with us.

Let us provide a broad outline of our proof. First we prefer to deal with torsion free groups.  Since every finitely
generated nilpotent group has a finite index torsion free subgroup we begin in Section \ref{sec:finext} by showing that
finite nuclear dimension is stable under finite extensions.  We then focus on the torsion free case. 

We proceed by induction on the Hirsch number (see Section \ref{sec:ngfacts}) of the nilpotent group $G.$  When dealing
with representation theoretic objects, (like a group C*-algebra) induction on the Hirsch number is sometimes more
helpful than induction on, say, the nilpotency class for the simple reason that non-trivial quotients of $G$ have Hirsch
number strictly less than $G$ while the nilpotency class of the quotient may be unchanged.

By \cite{Packer92} (see also Theorem \ref{thm:PackRae} below) we can view $C^*(G)$ as a continuous field over the dual
of its center, $Z(G)$.  Since $Z(G)$ is a finitely generated abelian group, our task is to bound the nuclear dimension
of the fibers as the base space is already controlled.

Since we are proceeding by induction we can more or less focus on those fibers (we call them $C^*(G,\widetilde{\gamma})$
) induced by characters $\gamma\in \widehat{Z(G)}$ that are faithful (as group homomorphisms) on $Z(G).$ In the case
that $G$ is a two step nilpotent group, then $C^*(G,\widetilde{\gamma})$ is a simple higher dimensional noncommutative
torus.  Phillips showed in \cite{Phillips06}  that any such C*-algebra is an A$\mathbb{T}$-algebra and therefore has
nuclear dimension (decomposition rank in fact) bounded by 1. Phillips' result is crucial for us as it allows us to
reduce to the case that the nilpotency class of $G$ is at least 3 and provides enough ``room" for the next step of the
proof.

We then find a subgroup $N$ of $G$, with strictly smaller Hirsch number, such that $G\cong N\rtimes\Z$.  ($N$ is simply
the subgroup generated by $Z_{n-1}(G)$ and all but one of the generators of $G / Z_{n-1}(G)$; see
section~\ref{sec:prelim} below for definitions.)  By induction we know $C^*(N)$ has finite nuclear dimension and so we
analyze the action of $\Z$ on fibers of $C^*(N).$  It turns out that there are two cases: In the first case, the fiber
is simple and the action of $\Z$ on the fiber is strongly outer and hence  the crossed product (which is a fiber of
$C^*(G)$) absorbs the Jiang-Su algebra by \cite{Matui14}. This in turn implies finite decomposition rank of the fiber
by a string of deep results (see Theorem \ref{thm:bigol}).  If the fiber is not simple, we can no longer employ the
results of \cite{Matui14},  but the non-simplicity forces the action restricted to the center of the fiber to have
finite Rokhlin dimension and so the crossed product has finite nuclear dimension by \cite{Hirshberg12}. 

\section{Preliminaries} \label{sec:prelim}

We assume the reader is familiar with the basics of group C*-algebras and discrete crossed products and refer them to
Brown and Ozawa \cite{Brown08} for more information.
\subsection{Facts about Nilpotent Groups} \label{sec:bignil}
\subsubsection{Group Theoretic Facts} \label{sec:ngfacts} We refer the reader to Segal's book \cite{Segal83} for more
information on polycyclic and nilpotent groups.  Here we collect some facts about nilpotent groups that we will use
frequently.
\\\\
Let $G$ be a group and define $Z_1(G)=Z(G)$ to be the center of $G.$  Recursively define $Z_n(G)\leq G$ to satisfy
$Z_n(G)/Z_{n-1}(G):=Z(G/Z_{n-1}(G)).$  A group $G$ is called \textbf{nilpotent} if $Z_n(G)=G$ for some $n$ and is called
\textbf{nilpotent of class \emph{n}} if $n$ is the least integer satisfying $Z_n(G)=G.$

A group $G$ is \textbf{polycyclic} if it has a normal series 
\begin{equation*}
\{ e \}\trianglelefteq G_1 \trianglelefteq \cdots \trianglelefteq G_{n-1}\trianglelefteq G_n
\end{equation*}
such that each quotient $G_{i+1}/G_i$ is cyclic. The number of times that $G_{i+1}/G_i$ is infinite is called the
\textbf{Hirsch number} of the group $G$ and is denoted by $h(G).$ The Hirsch number is an invariant of the group. If $G$
is polycyclic and $N$ is a normal subgroup then both $N$ and $G/N$ are polycyclic with
\begin{equation*}
h(G)=h(N)+h(G/N)
\end{equation*}
Every finitely generated nilpotent group is polycyclic.
\\\\
Let now $G$ be finitely generated and nilpotent.  Let $G_f$ denote the subgroup consisting of those elements with finite
conjugacy class.  By \cite{Baer48}, the center $Z(G)$ has finite index in $G_f.$ If in addition $G$ is torsion free,
then by \cite{Malcev49} the quotient groups $G/Z_{i}(G)$ are also all torsion free. These facts  combine to show that if
$G$ is torsion free, then every non-central conjugacy class is infinite.

\subsubsection{Representation Theoretic Facts} \label{sec:nilfacts}
 
\begin{definition}
Let $G$ be a group and $\phi:G\rightarrow\C$  a positive definite function with $\phi(e)=1.$  If $\phi$ is constant on
conjugacy classes, then we say $\phi$ is a \textbf{trace} on $G.$  It is clear that any such $\phi$ defines a tracial
state on $C^*(G).$  Following the representation theory literature we call an extreme trace a \textbf{character}.  Every
character gives rise to a factor representation and is therefore multiplicative on $Z(G).$
\end{definition}

Let $G$ be a finitely generated nilpotent group. Let $J$ be a primitive ideal of $C^*(G)$ (i.e. the kernel of an
irreducible representation).  Moore and Rosenberg showed in \cite{Moore76} that $J$ is actually a maximal ideal. Shortly
after this result,  Howe showed in \cite{Howe77} (see especially the introduction of \cite{Carey84}) that every
primitive ideal is induced from a \emph{unique}  character, i.e. there is a unique character $\phi$ on $G$ such that
\begin{equation*}
J=\{ x\in C^*(G):\phi(x^*x) = 0  \}.
\end{equation*}
Let $G$ be a finitely generated nilpotent group and $\phi$ a character on $G.$  Set $K(\phi)=\{ g\in G:\phi(g)=1  \}.$
By \cite{Howe77} and \cite{Carey84}, $G$ is \textbf{centrally inductive}, i.e. $\phi$ vanishes on the complement of $\{
g\in G: gK(\phi)\in (G/K(\phi))_f  \}.$  In this light we make the following definition.
\begin{definition} \label{def:trivial-extension}
  Let $N\le G$ and $\phi$ be a positive definite function on $N$.  We denote by $\widetilde{\phi}$ the \textbf{trivial
  extension} of $\phi$ to $G$, i.e. that extension of $\phi$ satisfying $\widetilde{\phi}(x) = 0$ for all $x\not\in N$.
\end{definition}
We use the following well-known fact repeatedly,
\begin{lemma} \label{lem:centervanish}
Let $G$ be a finitely generated torsion free nilpotent group and $\gamma$ a faithful character on $Z(G).$  Then
$\widetilde{\gamma}$ is a character of $G$.
\end{lemma}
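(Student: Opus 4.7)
The plan is to verify in two stages that $\widetilde{\gamma}$ is a character: first, that it is a trace on $G$, and second, that it is extremal among traces. The first is a standard induced-representation matrix coefficient computation, while the second reduces to showing that a certain twisted group von Neumann algebra is a factor; this factoriality step, which uses torsion-freeness of $G$ and faithfulness of $\gamma$ in an essential way, is the main obstacle.

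For the trace property, I would realize $\widetilde{\gamma}$ as a diagonal matrix coefficient of $\rho := \mathrm{Ind}_{Z(G)}^{G}\gamma$, acting on $\ell^2(G/Z(G))$ at the cyclic vector $\delta_{eZ(G)}$. A direct computation using any transversal for $Z(G)$ shows $\langle\rho(g)\delta_{eZ(G)},\delta_{eZ(G)}\rangle = \gamma(g)$ for $g\in Z(G)$ and $0$ otherwise, matching $\widetilde{\gamma}$ and giving positive definiteness; conjugation-invariance is then immediate from normality of $Z(G)$.

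For extremality the goal is to show $M := \rho(G)''$ is a factor. Fix a set-theoretic section $s:Q\to G$ for $Q := G/Z(G)$ and form the $Z(G)$-valued $2$-cocycle $\omega(x,y) := s(x)s(y)s(xy)^{-1}$; composition with $\gamma$ gives the $\T$-valued cocycle $\overline{\omega} := \gamma\circ\omega$. Because $\rho(z) = \gamma(z)\cdot I$ for each $z\in Z(G)$, the unitaries $u_x := \rho(s(x))$ satisfy $u_xu_y = \overline{\omega}(x,y)u_{xy}$ and identify $M$ with the twisted group von Neumann algebra $L(Q,\overline{\omega})$, under which $\widetilde{\gamma}$ corresponds to the canonical trace.

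To prove factoriality, let $T\in Z(M)$ and expand $T = \sum_{q\in Q}c_q u_q$ as an $\ell^2$-Fourier series. Since $u_pu_qu_p^{-1}$ is a $\T$-scalar multiple of $u_{pqp^{-1}}$, centrality forces $|c_q|$ to be constant on each conjugation orbit in $Q$; combined with $\ell^2$-summability and the Baer--Malcev fact $Q_f = Z(Q)$ recalled in Section~\ref{sec:ngfacts}, this confines the support of $T$ to $Z(Q)$. The remaining step, which I expect to be the main obstacle, is to push this support down to $\{e\}$ using faithfulness of $\gamma$. For $q\in Z(Q)$ one has $s(q)\in Z_2(G)$, so $[s(p),s(q)]\in Z(G)$, and a direct computation in $M$ gives $u_pu_qu_p^{-1}u_q^{-1} = \gamma([s(p),s(q)])\cdot I$. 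Centrality of $T$ then forces $\gamma([s(p),s(q)]) = 1$ for every $p\in Q$ whenever $c_q\neq 0$; faithfulness of $\gamma$ on $Z(G)$ upgrades this to $[s(p),s(q)] = e$ for every $p\in Q$, and since $s(q)$ already commutes with $Z(G)$ while $\{s(p):p\in Q\}\cup Z(G)$ generates $G$, we obtain $s(q)\in Z(G)$, i.e.\ $q = e$ in $Q$. Hence $T$ is scalar, $M$ is a factor, and $\widetilde{\gamma}$ is a character.
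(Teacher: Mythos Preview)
Your argument is correct, and it takes a genuinely different route from the paper's proof. The paper argues as follows: since $\widetilde{\gamma}$ is a trace, an extreme-point argument produces a character $\omega$ on $G$ with $\omega|_{Z(G)}=\gamma$; faithfulness of $\gamma$ together with the fact that every nontrivial normal subgroup of a nilpotent group meets the center forces $K(\omega)=\{e\}$; then the \emph{central inductivity} of finitely generated nilpotent groups (Howe, Carey--Moran) implies that $\omega$ vanishes off $G_f(\omega)=G_f=Z(G)$, whence $\omega=\widetilde{\gamma}$.

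By contrast, you bypass central inductivity entirely and prove extremality directly by showing the GNS von Neumann algebra $L(Q,\overline{\omega})$ is a factor. Your factoriality step uses exactly the same Baer--Mal'cev input ($Q=G/Z(G)$ is torsion free, so $Q_f=Z(Q)$), but replaces the Howe/Carey--Moran black box with the concrete commutator computation $u_pu_qu_p^{-1}u_q^{-1}=\gamma([s(p),s(q)])I$ for $q\in Z(Q)$, which is where faithfulness of $\gamma$ does its work. The payoff is a self-contained argument that does not appeal to the deeper representation-theoretic machinery; the cost is that it is longer and somewhat more computational. Note also that this is essentially Kleppner's criterion for when a twisted group von Neumann algebra is a factor, specialized to this situation---you might cite that to shorten the exposition.
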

\begin{proof}
Since $\gamma$ is a character and the extension $\widetilde{\gamma}$ is a trace on $G$, by a standard extreme point
argument there is a character $\omega$ on $G$ that extends $\gamma.$  Since $G$ is nilpotent, every non-trivial normal
subgroup intersects the center non-trivially. Since $K(\widetilde{\gamma})\cap Z(G)$ is trivial, we have
$K(\widetilde{\gamma})$ is also trivial. By the preceding section, $G_f=Z(G)$, i.e. $\omega$ vanishes off of $Z(G).$
But this means precisely that $\omega=\widetilde{\gamma}$.
\end{proof}

It is clear from the definitions that every nilpotent group is amenable and therefore by \cite{Lance73}, group
C*-algebras of nilpotent groups are nuclear.  In summary, for every primitive ideal $J$ of $C^*(G)$, the quotient
$C^*(G)/J$ is simple and nuclear with a unique trace.

\subsection{C*-facts}
\begin{definition}[\cite{Matui14}] \label{def:souter}
Let $G$ be a group acting on a C*-algebra $A$ with unique trace $\tau.$  Since $\tau$ is unique the action of $G$ leaves
$\tau$ invariant and therefore extends to an action on $\pi_\tau(A)''$, the von Neumann algebra generated by the GNS
representation of $\tau.$ If for each $g\in G\setminus\{ e \}$ the automorphism of $\pi_\tau(A)''$ corresponding to $g$
is an outer automorphism, then we say that the action is \textbf{strongly outer.}
\end{definition} 
The above definition can be modified to make sense even if $A$ does not have a unique trace (see \cite{Matui14}), but
we are only concerned with the unique trace case. The following theorem provides a key step in our main result.
\begin{theorem} [\textup{\cite[Corollary 4.11]{Matui14}}] \label{thm:souter}
Let $G$ be a discrete elementary amenable group acting on a unital, separable, simple, nuclear C*-algebra $A$ with property (SI) and finitely many extremal traces. If the action of
$G$ is strongly outer,  then the crossed product $A\rtimes G$ is $\mathcal{Z}$-absorbing.
\end{theorem}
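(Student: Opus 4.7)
The plan is to combine the two pillars of the Matui--Sato program, namely property (SI) and the characterization of $\mZ$-absorption via tracial data, with an induction over the hierarchical description of elementary amenable groups. Concretely, if one can prove that $A\rtimes G$ again has property (SI) and only finitely many extremal traces, then the Matui--Sato theorem that a unital, separable, simple, nuclear C*-algebra with property (SI) and a finite extremal trace space is $\mZ$-stable closes the argument.

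First I would exploit the definition of the class of elementary amenable groups: it is the smallest class containing the finite and abelian groups that is closed under extensions and directed unions. This supports a transfinite induction on the elementary amenable rank, so the theorem reduces to two moves. For a directed union $G=\bigcup_n G_n$, one writes $A\rtimes G$ as an inductive limit of the $A\rtimes G_n$; strong outerness passes to subgroups, the inductive hypothesis applies at each stage, and $\mZ$-stability is closed under inductive limits. For an extension $1\to N\to G\to Q\to 1$, one uses the iterated crossed product $A\rtimes G\cong (A\rtimes N)\rtimes Q$ and applies the inductive hypothesis first to $N\curvearrowright A$ and then to $Q\curvearrowright A\rtimes N$, provided one verifies at each layer that simplicity, nuclearity, property (SI), finiteness of the extremal trace space, and strong outerness of the induced action are all preserved.

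The decisive technical ingredient, appearing in both moves, is to propagate property (SI) through the crossed product under strong outerness. Here one constructs equivariant Rokhlin-type towers in the central sequence algebra $A_\omega\cap A'$; strong outerness is precisely what lets the Connes--Ocneanu machinery at the von Neumann level $\pi_\tau(A)''$ place these towers in the correct relative commutant, and property (SI) is the tool that transfers tracial comparison of positive contractions in $A_\omega\cap A'$ to honest Cuntz comparison, yielding an approximately unital $^{*}$-homomorphism from a prime dimension drop algebra into the central sequence algebra of $A\rtimes G$. Finite-dimensionality of the tracial boundary of $A\rtimes G$ follows from the fact that $G$ permutes the finitely many extreme traces of $A$, so extremal traces of $A\rtimes G$ are parametrized by $G$-orbits together with characters of the associated stabilizer subgroups, and these stabilizers are themselves elementary amenable.

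The main obstacle I expect is the construction of the equivariant Rokhlin towers. For $G=\Z$ or for finite $G$ this is close to Matui's earlier work, but in the inductive step through a general extension or union the towers must be produced in tandem with the verification that property (SI) survives; the subtle point is that the McDuff-type splitting inside $A_\omega\cap A'$ has to be arranged \emph{equivariantly}, using amenability of $G$ via Ornstein--Weiss-style $\varepsilon$-quasi-tilings, and then lifted from the tracial von Neumann level back to the C*-central sequence algebra by means of property (SI). The remainder of the argument is a bookkeeping exercise layered on top of this technical core.
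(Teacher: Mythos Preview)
The paper does not prove this statement at all: Theorem~\ref{thm:souter} is simply quoted from \cite[Corollary~4.11]{Matui14} and used as a black box, so there is no proof in the paper to compare your proposal against. Your outline is therefore not competing with anything in the text; it is an attempt to reconstruct Matui and Sato's own argument.

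As a sketch of their method your proposal is in the right spirit---induction along the elementary amenable hierarchy, equivariant Rokhlin towers in the central sequence algebra, and property~(SI) as the bridge between tracial and C*-level comparison---but the logical shape you propose is slightly off. You frame the argument as ``show $A\rtimes G$ again has property~(SI) and finitely many extremal traces, then invoke the Matui--Sato $\mZ$-stability criterion.'' The second half of this is problematic: your parametrization of extremal traces of $A\rtimes G$ by $G$-orbits and characters of stabilizers can produce infinitely many extremal traces (an infinite abelian stabilizer already does this), so finiteness of the tracial boundary is not generally preserved, and this is not how Matui and Sato proceed. Their actual argument bypasses the trace space of $A\rtimes G$ and works directly in the central sequence algebra of the crossed product, producing a unital $^*$-homomorphism from a prime dimension drop algebra into $(A\rtimes G)_\omega\cap (A\rtimes G)'$ via the equivariant Rokhlin data and an equivariant form of property~(SI). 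Likewise, in the extension step $1\to N\to G\to Q\to 1$ you assume the induced $Q$-action on $A\rtimes N$ is strongly outer; this does not follow formally from strong outerness of $G\curvearrowright A$ and is one of the genuine technical points Matui and Sato have to establish. So your plan identifies the right ingredients but mis-orders them and leans on two claims (preservation of finitely many extremal traces, automatic strong outerness of the quotient action) that are not available for free.
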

The above theorem is actually given in greater generality in \cite{Matui14}.  We will only make use of it in the case
where $G$ is Abelian.  Another key idea for us is the fact that discrete amenable group C*-algebras decompose as
continuous fields over their centers.  First a definition, 
\begin{definition} \label{def:GNS}
  Let $G$ be a group and $\phi$ a positive definite function on $G$.  Then, we write $C^*(G,\phi)$ for the C*-algebra
  generated by the GNS representation of $\phi$.
\end{definition}
We recall the following special case of \cite[Theorem 1.2]{Packer92}:
\begin{theorem}  \label{thm:PackRae}
  Let $G$ be a discrete amenable group.  Then $C^*(G)$ is a continuous field of C*-algebras over  $\widehat{Z(G)},$ the
  dual group of $Z(G).$  Moreover, for each multiplicative character $\gamma\in \widehat{Z(G)}$ the fiber at $\gamma$ is
  isomorphic to $C^*(G,\widetilde{\gamma}).$ 
\end{theorem}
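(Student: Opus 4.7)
The plan is to realize $C^*(G)$ as a $C(\widehat{Z(G)})$-algebra and then invoke Packer--Raeburn to identify the fibers and obtain continuity of the field.

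Since $Z(G)$ is normal and central in $G$, a set-theoretic section of $G\to G/Z(G)$ presents $C^*(G)$ as a twisted crossed product of $C^*(Z(G))$ by $G/Z(G)$ with trivial action, where the twist is the $2$-cocycle coming from the central extension. Gelfand duality gives $C^*(Z(G))\cong C(\widehat{Z(G)})$, which endows $C^*(G)$ with the structure of a $C(\widehat{Z(G)})$-algebra whose fibers are
\[ C^*(G)/I_\gamma, \qquad I_\gamma := \la \{z-\gamma(z)\cdot 1 : z\in Z(G)\}\ra. \]
By the standard bookkeeping for central extensions, $C^*(G)/I_\gamma$ is canonically isomorphic to a twisted group $C^*$-algebra $C^*(G/Z(G),\omega_\gamma)$, where $\omega_\gamma$ is the $\T$-valued $2$-cocycle on $G/Z(G)$ obtained from the central extension via $\gamma$.

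I would then identify this fiber with $C^*(G,\widetilde{\gamma})$: the relation $\widetilde{\gamma}(zx)=\gamma(z)\widetilde{\gamma}(x)$ for $z\in Z(G)$ and $x\in G$ shows that the GNS representation of $\widetilde{\gamma}$ sends $z$ to $\gamma(z)\cdot 1$, and hence factors through $C^*(G)/I_\gamma\cong C^*(G/Z(G),\omega_\gamma)$. The canonical trace on the twisted group algebra (evaluation at the identity coset) corresponds to the state induced by $\widetilde{\gamma}$, and amenability of $G/Z(G)$ (inherited from amenability of $G$) ensures this trace is faithful, so the induced surjection onto $C^*(G,\widetilde{\gamma})$ is an isomorphism.

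Finally, to upgrade the natural upper semicontinuous bundle structure to a continuous field, I would appeal directly to \cite[Theorem 1.2]{Packer92}: in the twisted crossed product picture, amenability of $G/Z(G)$ ensures that the reduced and full constructions coincide, which translates into lower semicontinuity of $\gamma\mapsto \V a\V_\gamma$ for each $a\in C^*(G)$. The main obstacle is precisely this continuity step, where amenability plays the decisive role; the algebraic fiber identification, given the twisted crossed product description, is essentially bookkeeping.
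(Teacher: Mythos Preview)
Your outline is reasonable and is essentially the route one would take to establish this result from scratch. However, note that the paper does not prove this theorem at all: it is stated only as a special case of \cite[Theorem 1.2]{Packer92}, with no argument given. So there is nothing in the paper to compare your proposal against beyond the citation itself.

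That said, your sketch is precisely in the spirit of the cited result. Packer and Raeburn work in the generality of twisted crossed products $C_0(X)\rtimes_{(\alpha,u)} G$ and show these are continuous fields over $X$ when $G$ is amenable; your reduction to the twisted group algebra $C^*(G/Z(G),\omega_\gamma)$ via the central extension, and your use of amenability of $G/Z(G)$ for the lower semicontinuity of fiber norms, match how their general theorem specializes here. The identification of the fiber with $C^*(G,\widetilde{\gamma})$ via faithfulness of the canonical trace on the twisted group algebra of an amenable group is also the standard argument. In short: your proposal is a correct expansion of what the paper leaves as a black-box citation.
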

\begin{definition}
  Let $A$ be a C*-algebra. Denote by $\nd(A)$ the \textbf{nuclear dimension} of $A$ (see \textup{\cite{Winter10}} for
  the definition of nuclear dimension).
\end{definition}
It will be crucial for us to view $C^*(G)$ as a continuous field for our inductive step to work in the proof of our main
theorem.  The following allows us to control the nuclear dimension of continuous fields.
\begin{theorem}[\textup{\cite[Lemma 3.1]{Carrion11}, \cite[Lemma 5.1]{Tikuisis14}}] \label{thm:jose}
Let $A$ be a continuous field of C*-algebras over the finite dimensional compact space $X.$  For each $x\in X$, let
$A_x$ denote the fiber of $A$ at $x.$  Then
\begin{equation*}
  \nd(A)\leq (\textup{dim}(X)+1)(\sup_{x\in X}\nd(A_x)+1)-1.
\end{equation*}
\end{theorem}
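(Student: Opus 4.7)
The plan is as follows. Set $d=\dim(X)$ and $m=\sup_{x\in X}\nd(A_x)$. To bound $\nd(A)$ by $(d+1)(m+1)-1$, fix a finite set $\mathcal{G}\subset A$ and $\varepsilon>0$, and aim to produce a finite-dimensional C*-algebra $E$, decomposed as $E = \bigoplus_{i=0}^{d}\bigoplus_{j=0}^{m} E_{i,j}$, together with a c.p.c.\ map $\psi:A\to E$ and a c.p.\ map $\varphi:E\to A$ that restricts to an order-zero contraction on each $E_{i,j}$, such that $\|\varphi\psi(a)-a\|<\varepsilon$ for all $a\in\mathcal{G}$.

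First I would fix local fibrewise approximations. For each $x\in X$, because $\nd(A_x)\le m$, choose c.p.c.\ maps $A_x\xrightarrow{\psi^x} E^x\xrightarrow{\varphi^x}A_x$ with $E^x = E^x_0\oplus\cdots\oplus E^x_m$ finite-dimensional, $\varphi^x|_{E^x_j}$ order zero, and $\|\varphi^x\psi^x(a(x))-a(x)\|<\varepsilon$ for all $a\in\mathcal{G}$. Using the continuous-bundle structure (local triviality up to small perturbations, and the fact that $A_x$ is a quotient of a $C(X)$-subalgebra of $A$), each such fibrewise approximation extends to a c.p.c.\ map $\widetilde{\psi}^x:A\to E^x$ and a c.p.\ map $\widetilde{\varphi}^x:E^x\to A$ defined on a neighbourhood $U_x$ of $x$, in the sense that $\|\widetilde{\varphi}^x\widetilde{\psi}^x(a)(y)-a(y)\|<2\varepsilon$ for $y\in U_x$ and $a\in\mathcal{G}$.

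Next I would exploit $\dim(X)=d$. By compactness and the definition of covering dimension, refine $\{U_x\}_{x\in X}$ to a finite open cover $\{V_\lambda\}_{\lambda\in\Lambda}$ of multiplicity at most $d+1$; equivalently, colour $\Lambda=\Lambda_0\sqcup\cdots\sqcup\Lambda_d$ so that any two sets sharing a colour are disjoint. Fix a subordinate partition of unity $\{h_\lambda\}\subset C(X)$, pick $x_\lambda\in V_\lambda$, and write $E^\lambda:=E^{x_\lambda}$. Set $E_{i,j}:=\bigoplus_{\lambda\in\Lambda_i}E^\lambda_j$, and define the $(i,j)$-block of $\varphi$ by cutting each $\widetilde{\varphi}^{x_\lambda}$ down by the central element $h_\lambda^{1/2}\in C(X)\subset Z(M(A))$ and summing over $\lambda\in\Lambda_i$; define $\psi$ by dualising. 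Because the supports within a single colour class $\Lambda_i$ are pairwise disjoint, the resulting block-$j$ map on $E_{i,j}$ remains order zero, and the partition-of-unity identity $\sum_\lambda h_\lambda =1$ together with the local approximation estimates gives $\|\varphi\psi(a)-a\|<C\varepsilon$ on all of $X$ for $a\in\mathcal{G}$.

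The main obstacle is the lifting step: one must pass from a fibrewise c.p.c.\ order-zero decomposition to a local c.p.c.\ approximation of the whole field while preserving the order-zero property of each colour. This requires a local selection/lifting result for c.p.\ maps out of a continuous C*-bundle, and the observation that the central $C(X)$-action naturally provides the cutoff functions used to patch local approximations without destroying order-zeroness. Once these ingredients are in place, the combinatorics of the $(d+1)$-multiplicity cover and the $(m+1)$ fibrewise colours multiply, producing the claimed bound $(d+1)(m+1)-1$.
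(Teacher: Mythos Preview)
The paper does not give its own proof of this statement; it is simply quoted from \cite{Carrion11} and \cite{Tikuisis14}. Your sketch is essentially the argument found in those references: localize the fibrewise $(m+1)$-colour approximations, lift them back into the bundle, and then patch using a $(d+1)$-coloured cover of $X$ together with a subordinate partition of unity, so that the two colourings multiply to give $(d+1)(m+1)$ colours in total.

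One point of imprecision worth flagging: you invoke ``local triviality up to small perturbations,'' but a continuous field (or $C(X)$-algebra) need not be locally trivial in any useful sense. What actually drives the lifting step is more concrete. The evaluation $A\to A_x$ is a surjective $*$-homomorphism, so $\psi^x$ lifts for free by precomposition. For the order-zero pieces $\varphi^x|_{E^x_j}:E^x_j\to A_x$, one uses the structure theorem for order-zero maps (equivalently, projectivity of cones over finite-dimensional C*-algebras) to obtain genuine c.p.c.\ order-zero lifts $E^x_j\to A$. Continuity of the field then guarantees that the composite $\widetilde{\varphi}^x\widetilde{\psi}^x$ still approximates $a$ on a neighbourhood of $x$, since the norm function $y\mapsto\|b(y)\|$ is continuous. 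With that correction in place, multiplying by $h_\lambda$ (which is central) preserves order-zeroness, and disjointness within each colour class $\Lambda_i$ ensures the direct sum over $\lambda\in\Lambda_i$ remains order zero. This is exactly the mechanism in the cited proofs.
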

Throughout this paper we never explicitly work with decomposition rank, nuclear dimension, property (SI) or the Jiang-Su
algebra $\mathcal{Z}$ (for example, we never actually build any approximating maps in proving finite nuclear dimension).
For this reason we do not recall the lengthy definitions of these properties but simply refer the reader to
\cite[Definition 3.1]{Kirchberg04} for decomposition rank, \cite[Definition 2.1]{Winter10} for nuclear dimension,
\cite{Jiang99,Elliott08} for the Jiang-Su algebra and its role in the classification program, and \cite[Definition
4.1]{Matui12} for property (SI).

Finally, for easy reference we gather together several results into
\begin{theorem}[R\o rdam, Winter, Matui and Sato] \label{thm:bigol}
  Let $A$ be a unital, separable, simple, nuclear, quasidiagonal C*-algebra with a unique tracial state.  If $A$ has any
  of the following properties then it has all of them.
\begin{enumerate}
  \item[(i)] Finite nuclear dimension.
  \item[(ii)] $\mathcal{Z}$-stability.
  \item[(iii)] Strict comparison.
  \item[(iv)] Property (SI) of Matui and Sato.
  \item[(v)] Decomposition rank at most 3.
\end{enumerate}
In particular, if $A$ is a primitive quotient of a finitely generated nilpotent group C*-algebra then it satisfies the
hypotheses of this Theorem.
\end{theorem}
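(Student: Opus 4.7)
The statement is a compendium that packages several known theorems on Toms-Winter style equivalences, specialized to the unique-trace case, together with the decomposition-rank refinement for quasidiagonal algebras. The plan is to verify the equivalences (i)--(v) by assembling a cycle of implications from the literature, and then to verify that every primitive quotient of $C^*(G)$, for $G$ finitely generated nilpotent, satisfies the standing hypotheses of the theorem.

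For the cycle, I would proceed as follows. The implication (v)$\Rightarrow$(i) is immediate from the general inequality that nuclear dimension is dominated by decomposition rank. The implication (i)$\Rightarrow$(ii) is Winter's theorem that simple, separable, unital, nuclear C*-algebras of finite nuclear dimension are $\mZ$-stable. The implication (ii)$\Rightarrow$(iii) is R\o rdam's theorem that stably finite $\mZ$-stable algebras have strict comparison. The equivalence (iii)$\Leftrightarrow$(iv) and the implication (iv)$\Rightarrow$(ii) in the unique-trace setting are the results of Matui-Sato, obtained via their central-sequence techniques and property (SI). Finally, (ii)$\Rightarrow$(v) under the additional hypothesis of quasidiagonality and unique trace, including the explicit bound of $3$ on the decomposition rank, follows from combining Matui-Sato's analysis of approximate maps on central sequence algebras with a decomposition-rank estimate in the quasidiagonal case.

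For the \emph{in particular} clause, with $A = C^*(G)/J$ and $J$ a primitive ideal of the group C*-algebra of a finitely generated nilpotent group $G$: unitality and separability are immediate because $G$ is countable and discrete; nuclearity of $A$ descends from nuclearity of $C^*(G)$, which is Lance's theorem applied to the amenable group $G$; simplicity of $A$ is the Moore-Rosenberg result that every primitive ideal of $C^*(G)$ is maximal; uniqueness of trace on $A$ follows from Howe's theorem (recalled in Section~\ref{sec:nilfacts}) identifying the primitive ideal $J$ with a unique character on $G$; and quasidiagonality of $A$ is supplied by the previously established fact that primitive quotients of $C^*(G)$ for $G$ finitely generated nilpotent are quasidiagonal.

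The main obstacle here is not a novel argument but careful bookkeeping: one must line up the precise hypotheses of \cite{Matui12,Matui14,Matui14a} -- in particular the roles of property (SI), the unique-trace assumption, and quasidiagonality -- so that the circular chain closes within the stated class and the explicit constant $3$ appears in (v). Once this alignment is done, the verification of (i)--(v) for primitive quotients of finitely generated nilpotent group C*-algebras is a straightforward check against the list above.
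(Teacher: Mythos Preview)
Your proposal is correct and follows essentially the same route as the paper: the same cycle (i)$\Rightarrow$(ii) via Winter, (ii)$\Leftrightarrow$(iii)$\Leftrightarrow$(iv) via R\o rdam and Matui--Sato, then (v) via \cite{Matui14a}, with (v)$\Rightarrow$(i) trivial. Your verification of the hypotheses for primitive quotients of $C^*(G)$ is somewhat more explicit than the paper's, which simply points to \cite{Eckhardt14} for quasidiagonality and to Section~\ref{sec:nilfacts} for the rest, but the content is the same.
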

\begin{proof}
  Winter showed (i) implies (ii) in \cite{Winter12}.  Results of Matui and Sato \cite{Matui12} and R\o rdam
  \cite{Rordam04} show that (ii), (iii) and (iv) are all equivalent.  Strict comparison and Matui and Sato's
  \cite{Matui14a} shows (v).  Finally (v) to (i) follows trivially from the definitions.
  If $G$ is a nilpotent group, by \cite{Eckhardt14}, any (primitive) quotient of $C^*(G)$ is quasidiagonal and therefore
  satisfies the hypotheses of the theorem by the discussion in Section \ref{sec:nilfacts}. 
\end{proof}
\section{Stability under finite extensions} \label{sec:finext}
In this section we show that if a nilpotent group $G$ has a finite index normal subgroup $H$ such that
$\nd(C^*(H))<\infty$, then $\nd(C^*(G))<\infty.$ Perhaps surprisingly, this portion of the proof is the most involved
and relies on several deep results of C*-algebra theory. Moreover we lean heavily on the assumption that $G$ is nilpotent.

This section exists because every finitely generated nilpotent group has a finite index subgroup
that is torsion free.  Reducing to this case gives the reader  a clear idea of what is happening without getting
bogged down in torsion.  We begin with the following special case that isolates most of the technical details.
\begin{theorem} \label{thm:findexp} Let $G$ be a finitely generated nilpotent group.  Suppose $H$ is a  normal subgroup
of finite index such that every primitive quotient of $C^*(H)$ has finite nuclear dimension.  Then every primitive
quotient of $C^*(G)$ has decomposition rank at most 3.
\end{theorem}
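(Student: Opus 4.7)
The plan is to reduce via Clifford theory to a crossed product by the finite group $G/H$ and then apply Theorem \ref{thm:souter}. By Section \ref{sec:nilfacts} and Theorem \ref{thm:bigol}, every primitive quotient $A = C^*(G)/J$ of a finitely generated nilpotent group C*-algebra is unital, simple, nuclear, quasidiagonal with a unique trace; so it suffices to exhibit $\mathcal{Z}$-absorption of $A$, whence Theorem \ref{thm:bigol} yields decomposition rank at most $3$.

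Let $\pi$ be an irreducible representation of $G$ with kernel $J$. First I would carry out a Clifford--Mackey reduction: decompose $\pi|_H$ into a single $G/H$-orbit $\{\rho_1,\dots,\rho_k\}$ of pairwise inequivalent irreducibles, let $L\le G$ be the inertia subgroup of $\rho_1$, and use Mackey's theorem to produce an irreducible $\sigma$ of $L$ with $\sigma|_H\cong \rho_1$ and $\pi\cong \mathrm{Ind}_L^G\sigma$. Standard bookkeeping for induced representations then identifies $\pi(C^*(G))\cong M_k\bigl(\sigma(C^*(L))\bigr)$, and $L$ is still finitely generated nilpotent with $H$ normal of finite index. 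After replacing $G$ by $L$, I may therefore assume that $\pi|_H$ is itself irreducible. Then $B := \pi(C^*(H))$ is a primitive quotient of $C^*(H)$, and by hypothesis together with Theorem \ref{thm:bigol} it is simple, nuclear, quasidiagonal, $\mathcal{Z}$-stable, with property (SI) and a unique trace.

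Choosing coset representatives $\{s_{\bar g}\}_{\bar g\in F}$ for $F := G/H$, the unitaries $u_{\bar g} := \pi(s_{\bar g})\in A$ normalize $B$ and generate $A$ over $B$; since $B'\cap A = \mathbb C$ by irreducibility of $\pi|_H$, they satisfy a $\mathbb T$-valued $2$-cocycle relation $u_{\bar g}u_{\bar h} = \omega(\bar g,\bar h)\,u_{\overline{gh}}$. Thus $A$ is a twisted crossed product $B\rtimes_{\alpha,\omega}F$, and by pulling $\omega$ back along a finite central extension $\widetilde F\twoheadrightarrow F$ one presents $A$, as a full corner, inside an honest crossed product $B\rtimes_{\widetilde\alpha}\widetilde F$ by the finite elementary amenable group $\widetilde F$. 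Theorem \ref{thm:souter} then applies to $B\rtimes_{\widetilde\alpha}\widetilde F$, and $\mathcal{Z}$-absorption passes to the corner $A$, provided this action is strongly outer.

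The main obstacle is verifying strong outerness of $\widetilde\alpha$. At the C*-level outerness of each nontrivial $\alpha_{\bar g}$ is forced by simplicity of $A$: an inner implementation $\alpha_{\bar g} = \mathrm{Ad}(b)$ with $b\in B$ would combine with $B'\cap \mathcal B(\mathcal H) = \mathbb C$ to place $u_{\bar g}$ in $\mathbb C\cdot B$, contradicting the irreducibility of $\pi$ once $\bar g\neq \bar e$. Promoting this to outerness on the finite factor $\pi_\tau(B)''$ associated with the unique trace $\tau$ is the delicate point and is where the deep input of the section enters: I would exploit that $\pi_\tau(A)''$ is a finite factor containing $\pi_\tau(B)''$ as a subfactor of index $|F|$, and apply a Kishimoto-type central-sequence averaging together with uniqueness of $\tau$ on $A$ to exclude any weak inner implementation of $\widetilde\alpha_{\bar g}$. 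Once strong outerness is established, Theorem \ref{thm:souter} delivers $\mathcal{Z}$-absorption and Theorem \ref{thm:bigol} finishes the proof.
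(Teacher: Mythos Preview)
Your high-level plan---reduce to $\mathcal{Z}$-absorption and invoke Theorem~\ref{thm:souter}---matches the paper's, but the execution has two genuine gaps.

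First, the untwisting step defeats itself. If you pass to a finite central extension $\widetilde{F}\twoheadrightarrow F$ to kill the cocycle, the elements of the kernel act \emph{trivially} on $B$, so $\widetilde\alpha$ is never strongly outer and Theorem~\ref{thm:souter} does not apply to $B\rtimes_{\widetilde\alpha}\widetilde{F}$. (Indeed $C^*(\ker)$ lands in the center, so this crossed product is not even simple.) No twisted version of Theorem~\ref{thm:souter} is available here either.

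Second, and more seriously, you do not actually prove strong outerness of the action of $F$ on $B$; you only gesture at a ``Kishimoto-type averaging'' argument. The paper confronts exactly this dichotomy, though with a different action: instead of $G/H$ acting on $B=C^*(H)/J$, it uses the $\mathbb{Z}_p$-action $\beta$ on $M_p\otimes(C^*(H)/J)$ coming from translation on $\ell^\infty(G/H)$, whose fixed-point algebra is $C^*(G)/J_G$. When $\beta$ is strongly outer, Theorem~\ref{thm:souter} is applied as you intend. But the paper must also treat the case where $\beta$ is \emph{not} strongly outer, and that case is where most of the work lies. The argument there is nilpotent-specific: using that $Z(G)$ has finite index in $G_f$ and that the inducing character is faithful on $Z(G)$, one analyzes how conjugacy classes meet $Z(G)$-cosets to show that the implementing unitary $W$ already lies in the C*-algebra, so $\beta$ is inner. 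This produces a unitary conjugating $C^*(H)/J$ onto the diagonal copy $1_{M_p}\otimes(C^*(H)/J)$, after which $\mathcal{Z}$-stability of $C^*(H)/J$ is fed directly into the central sequence algebra of $C^*(G)/J_G$. None of this is supplied by your sketch.

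A smaller issue: after passing to the inertia subgroup in Mackey theory, $\sigma|_H$ is in general a \emph{multiple} of $\rho_1$, not $\rho_1$ itself, so you cannot immediately assume $\pi|_H$ is irreducible. The paper avoids Clifford theory entirely by inducting on $|G/H|$ to reduce to $|G/H|=p$ prime and working with the embedding $C^*(G)\subseteq \ell^\infty(G/H)\rtimes_\alpha G\cong M_p\otimes C^*(H)$.
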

\begin{proof} 
We proceed by induction on $|G/H|.$  Since $G$ is nilpotent, so is $G/H.$ In particular $G/H$ has a cyclic group of
prime order as a quotient.  By our induction hypothesis we may therefore suppose that $G/H$ is cyclic of prime order
$p.$

Let $e, x,x^2,...,x^{p-1}\in G$ be coset representatives of $G/H.$ Let $\alpha$ denote the action of $G$ on
$\ell^\infty(G/H)$ by left translation.  It is well-known, and easy to prove, that $\ell^\infty(G/H)\rtimes_\alpha
G\cong M_p\otimes C^*(H)$ and that under this inclusion $C^*(H)\subseteq  C^*(G)\subseteq M_p\otimes C^*(H)$ we may
realize this copy of $C^*(H)$ as the C*-algebra generated by the diagonal matrices 
\begin{equation}
(\lambda_h,\lambda_{xhx^{-1}},...,\lambda_{x^{p-1}hx^{-(p-1)}}), \textrm{ with }h\in H. \label{eq:diagpic}
\end{equation}
Let $(\pi,\mathcal{H}_\pi)$ be an irreducible representation of $G.$ We show that $C^*(G)/\textup{ker}(\pi)$ has
decomposition rank less than or equal to 3. Let $\tau$ be the unique character on $G$ that induces $\textup{ker}(\pi)$
(see Section \ref{sec:nilfacts}) and set $K(\tau)=\{ x\in G: \tau(x)=1 \}.$  By assumption every primitive quotient of
$H/(H\cap K(\tau))$ has finite nuclear dimension. Since  $\Big(G/K(\tau)\Big)/\Big(H/(H\cap K(\tau))\Big)$ is a quotient of
$G/H$ it is either trivial, in which case we are done by assumption or it is isomorphic to $G/H.$ We may therefore
assume, without loss of generality, that
\begin{equation}
\textup{the character }\tau\textup{ that induces  }\textup{ker}(\pi)\textup{ is faithful on }Z(G). \label{eq:centerfaith}
\end{equation}

 By a well-known application of Stinespring's Theorem (see \cite[Theorem 5.5.1]{Murphy90}) there is an irreducible
 representation $\id_p\otimes \sigma$ of $M_p\otimes C^*(H),$ such that $\mh_\pi\subseteq \mh_{\id_p\otimes \sigma}$ and
 if $P:\mh_{\id_p\otimes \sigma}\rightarrow \mh_\pi$ is the orthogonal projection, then
\begin{equation}
P((\id_p\otimes \sigma)(x))P=\pi(x)\quad \textrm{ for all }\quad x\in C^*(G). \label{eq:extendrep}
\end{equation}
Let $J=\textup{ker}(\sigma)\subseteq C^*(H)\subseteq C^*(G)$ and $J_G\subseteq C^*(G)$ be the ideal of $C^*(G)$
generated by $J.$  By (\ref{eq:extendrep}) we have $J_G\subseteq \textup{ker}(\pi).$
\\\\
We now consider two cases:
\\\\
\textbf{Case 1:} $J_G$ is a maximal ideal of $C^*(G)$, i.e. $J_G=\textup{ker}(\pi).$
\\\\
In this case we have
\begin{equation*}
C^*(H)/J\subseteq C^*(G)/J_G\subseteq M_p\otimes (C^*(H)/J).
\end{equation*}
We would like to reiterate that in general the copy of $C^*(H)/J$, is \emph{not} conjugate to the diagonal copy
$1_p\otimes (C^*(H)/J)$, but rather the twisted copy of (\ref{eq:diagpic}).  If $C^*(H)/J$ is conjugate to the diagonal
copy, then the proof is quite short (see the last paragraph of Case 1b), so most of the present proof consists of
overcoming this difficulty.
\\\\
Let $\Z_p\cong G/H$ denote the cyclic group of order $p.$ Define an action $\beta$ of $\Z_p$ on
$\ell^\infty(G/H)\rtimes_\alpha G$ by $\beta_t(f)(s)=f(s-t)$ for all $f\in \ell^\infty(G/H)$ (i.e. $\beta$ acts by left
translation on $\ell^\infty(G/H)$) and $\beta(\lambda_g)=\lambda_g$ for all $g\in G.$  Since the $G$-action $\alpha$ and
$\Z_p$-action $\beta$ commute with each other it is easy to see that $\beta$ defines an action of $\Z_p$ on
$\ell^\infty(G/H)\rtimes_\alpha G.$ Moreover 
\begin{equation*}
\beta_t(x)=x\quad \textrm{ for all }t\in \Z/p\Z\textrm{ if and only if }x\in C^*(G).
\end{equation*}
In particular $\beta$ fixes $J_G\subseteq C^*(G)$ pointwise. Notice that $M_p\otimes J$ is generated by the
$\beta$-invariant set $\{e_1  a_1 + \cdots + e_p  a_p\;|\; a_i\in J\}$, where $e_i$ denotes the $i$th minimal projection
of $M_p$.  Therefore, $\beta$  leaves $M_p\otimes J$ invariant (although not pointwise), and induces an automorphism of
$M_p\otimes (C^*(H)/J).$ Moreover the fixed-point subalgebra of this induced automorphism is exactly $C^*(G)/J_G$.
We now split further into two subcases based on the behavior of $\beta.$
\\\\
\textbf{Case 1a:} The action $\beta \curvearrowright M_p\otimes (C^*(H)/J)$ is strongly outer (Definition \ref{def:souter}).
\\\\
By assumption,   $M_p\otimes (C^*(H)/J)$ has finite nuclear dimension. By Theorem \ref{thm:bigol}, $M_p\otimes
(C^*(H)/J)$ then has property (SI).  Since $\beta$ is strongly outer, by \cite[Corollary 4.11]{Matui14}, the crossed
product $M_p\otimes (C^*(H)/J)\rtimes_\beta \Z_p$ is $\mathcal{Z}$-stable.  Since $M_p\otimes (C^*(H)/J)$ has a unique
trace and $\ell^\infty(p)\rtimes \Z_p\cong M_p$, it follows that $[M_p\otimes (C^*(H)/J)]\rtimes_\beta \Z_p$ has unique
trace. It follows from \cite{Eckhardt14} that $[M_p\otimes (C^*(H)/J)]\rtimes_\beta \Z_p$ is quasidiagonal.  Therefore
by Theorem \ref{thm:bigol}, $M_p\otimes (C^*(H)/J)\rtimes_\beta \Z_p$ has decomposition rank at most 3.

By \cite{Rosenberg79}, the fixed point algebra of $\beta$, i.e. $C^*(G)/J_G$ is isomorphic to a corner of $M_p\otimes
(C^*(H)/J)\rtimes_\beta \Z_p$, which by  Brown's  isomorphism theorem \cite{Brown77} implies that $C^*(G)/J_G$ is
stably isomorphic to $M_p\otimes (C^*(H)/J)\rtimes_\beta \Z_p$ and therefore $C^*(G)/J_G=C^*(G)/\textup{ker}(\pi)$ has
decomposition rank at most 3 by \cite[Corollary 3.9]{Kirchberg04}. 
\\\\
\textbf{Case 1b:} The action $\beta \curvearrowright M_p\otimes (C^*(H)/J)$ is not stongly outer (Definition \ref{def:souter}).
\\\\
Choose a generator $t$ of $\Z_p$ and set $\beta=\beta_t$ (note that every $\beta_t$ is strongly outer or none of them are).
We will first show that $\beta$ is actually an inner automorphism of $M_p\otimes (C^*(H)/J).$

The unique trace on $M_p\otimes (C^*(H)/J)$ restricts to the unique trace $\tau$  on $C^*(G)/J_G.$
We will use the common letter $\tau$ for both of these traces.

Let $G_f\leq G$ be the subgroup consisting of those elements with finite conjugacy classes.  By \cite[Lemma 3]{Baer48},
$G_f/Z(G)$ is finite.  Let $(\pi_\tau,L^2)$ be the GNS representation of $\ell^\infty(G/H)\rtimes_\alpha G$ associated
with $\tau.$ 

Since $\tau$ is multiplicative on $Z(G)$ it easily follows that  
\begin{equation*}
\la \lambda_x,\lambda_y   \ra_\tau=\tau(y^{-1}x)\in \T, \quad \textrm{for }x,y\in Z(G).
\end{equation*}
By the Cauchy-Schwarz inequality it follows that in $L^2$ we have 
\begin{equation}
\lambda_x=_{L^2}\tau(y^{-1}x)\lambda_y \textup{ for all }x,y\in Z(G).\label{eq:Zequality}
\end{equation}
Let $x_1,...,x_n\in G_f$ be coset representatives of $G_f/Z(G)$ and let $C\subseteq G$ be a set of coset
representatives for $G/G_f.$

The preceding discussion shows that the following set  spans a dense subset of $L^2:$
\begin{equation}
\{f\lambda_{tx_i}: f\in\ell^\infty(G/H), t\in C, 1\leq i\leq n\}. \label{eq:L2dense}
\end{equation}
Since $\tau$ is $\beta$-invariant, for each minimal projection $e\in \ell^\infty(p)$ and $t\in G$ we have
$\tau(e\lambda_t)=\tau(\beta(e)\lambda_t).$ So for each $f\in \ell^\infty(G/H)$ and $t\in G$ we have
$\tau(f\lambda_t)=\tau(f)\tau(t)$.

Combining this with the fact that $\tau$ vanishes on infinite conjugacy classes, we have
\begin{equation}
\la f\lambda_t,g\lambda_s  \ra_\tau=0 \textup{ for all }f,g\in \ell^\infty(G/H), \textrm{ and }t,s\in C, t\neq s.
\label{eq:orthog}
\end{equation}
Since $\beta$ is not strongly outer there is a unitary $W\in \pi_\tau(\ell^\infty(G/H)\rtimes_\alpha G)''$ such that 
\begin{equation*}
W\pi_\tau(x)W^*=\pi_\tau(\beta(x))\quad \textrm{ for all }x\in \ell^\infty(G/H)\rtimes_\alpha G.
\end{equation*}
Since $\beta$ leaves $\pi_\tau(G)$ pointwise invariant, $W$ must commute with $\pi_\tau(\lambda_t)$ for all $t\in G.$
For $t\in G$, let $\textup{Conj}(t)=\{ sts^{-1}:s\in G  \}$ be the conjugacy class of $t.$  
\\\\
Let $s\in G\setminus G_f.$  Suppose first that $\textup{Conj}(s)$ intersects infinitely many $G/G_f$ cosets.  Let
$(s_n)_{n=1}^\infty$ be a sequence from $G$ such that the cosets  $s_nss_n^{-1}G_f$ are all distinct. Let $f\in
\ell^\infty(G/H).$  Since $W$ commutes with the $\lambda_t$'s, for each $n\in \mathbb{N}$ we have 
\begin{align*}
  \la  W,f\lambda_s  \ra_\tau & =\la \lambda_{s_n^{-1}}W\lambda_{s_n},f\lambda_s  \ra_\tau\\
      &=\la W,\alpha_{s_n}(f)\lambda_{s_nss_n^{-1}}  \ra_\tau.
\end{align*}
By (\ref{eq:orthog}), the vectors $\{ \alpha_{s_n}(f)\lambda_{s_nss_n^{-1}}:n\in \N \}$ form an orthogonal family of
vectors, each with the same $L^2$ norm.  Since $W$ has $L^2$ norm equal to 1, this implies that  $\la W,f\lambda_s
\ra=0$ for all $f\in \ell^\infty(G/H).$
\\\\
Suppose now that $\textup{Conj}(s)$ intersects only finitely many $G/G_f$ cosets. Since $\textup{Conj}(s)$ is infinite,
and $Z(G)$ has finite index in $G_f$, there is a $y\in G$ such that $\textup{Conj}(s)\cap yZ(G)$ is infinite. Let $s_n$
be a  sequence from $G$ and $t_n$ a sequence of distinct elements from $Z(G)$ so $s_nss_n^{-1}=yt_n$ for all $n\in
\mathbb{N}.$ 

Let $f\in \ell^\infty(G/H).$  Since the set $\{  \alpha_g(f):g\in G \}$ is finite we may, without loss of generality,
suppose that $\alpha_{s_n}(f)=\alpha_{s_m}(f)$ for all $n,m\in\mathbb{N}.$ By (\ref{eq:Zequality}), we have
$\lambda_{yt_n}=_{L^2}\tau(t_m^{-1}t_n)\lambda_{yt_m}$ for all $n,m\in \mathbb{N}.$ We then have
\begin{align*}
  \la  W,f\lambda_s  \ra_\tau & =\la \lambda_{s_n^{-1}}W\lambda_{s_n},f\lambda_s  \ra_\tau\\
    &=\la W,\alpha_{s_n}(f)\lambda_{s_nss_n^{-1}}  \ra_\tau\\
    &=\la W,\alpha_{s_1}(f)\lambda_{yt_n}\ra_\tau\\
    &=\tau(t_1^{-1}t_n)\la W,\alpha_{s_1}(f)\lambda_{yt_1}\ra_\tau\\
\end{align*}
In particular, we have
\begin{equation*}
  \tau(t_1^{-1}t_1)\la W,\alpha_{s_1}(f)\lambda_{yt_1}\ra_\tau=\tau(t_1^{-1}t_2)\la
  W,\alpha_{s_1}(f)\lambda_{yt_1}\ra_\tau.
\end{equation*}
Since $\tau$ is faithful on $Z(G)$ (by (\ref{eq:centerfaith})) we have $1=|\tau(t_1^{-1}t_2)|,$ and
$\tau(t_1^{-1}t_2)\neq \tau(t_1^{-1}t_1)=1.$  Hence $\la  W,f\lambda_s  \ra_\tau=0.$
\\\\
We have shown that for all $s\in G\setminus G_f$ and $f\in \ell^\infty(G/H)$ we have $\la W,f\lambda_s \ra_\tau=0.$  By
(\ref{eq:L2dense}) it follows that
\begin{equation*}
  W\in \textup{span}\{  \pi_\tau(f\lambda_{x_i}): f\in \ell^\infty(G/H), 1\leq i\leq n \}\subseteq
  \pi_\tau(\ell^\infty(G/H)\rtimes G)\cong M_p\otimes (C^*(H)/J)
\end{equation*}
By the way that $\beta$ acts on $\ell^\infty(G/H)$ there are $a_1,...,a_p\in C^*(H)/J$ so
\begin{equation*}
W=e_{1p}\otimes a_1+\sum_{i=2}^pe_{i,i-1}\otimes a_i.
\end{equation*}
Set $U=\textup{diag}(1,a_2^*,a_2^*a_3^*,...,a_2^*a_3^*\cdots a_p^*)\in M_p\otimes (C^*(H)/J).$  Since $W$ commutes with
$C^*(H)/J$, by (\ref{eq:diagpic})  it follows that
\begin{equation}
  U \Big(C^*(H)/J\Big)  U^*=1_{M_p}\otimes C^*(H)/J\subseteq U( C^*(G)/J_G ) U^*\subseteq M_p\otimes (C^*(H)/J).
  \label{eq:twistCH}
\end{equation}
Let $\mathcal{U}$ be a free ultrafilter on $\mathbb{N}$ and for a C*-algebra $A$, let $A^\mathcal{U}$ denote the
ultrapower of $A.$ We think of $A\subset A^\mathcal{U}$ via the diagonal embedding and write $A'\cap A^\mathcal{U}$ for
those elements of the ultrapower that commute with this diagonal embedding.

By Theorem \ref{thm:bigol}, $C^*(H)/J$ is $\mathcal{Z}$-stable. By \cite[Theorem 7.2.2]{Rordam02}, there is an embedding
$\phi$ of $\mathcal{Z}$ into $(C^*(H)/J)^\mathcal{U}\cap (C^*(H)/J)'.$  By both inclusions of (\ref{eq:twistCH}) it is
clear that $1_{M_p}\otimes \phi$ defines an embedding of $\mathcal{Z}$ into  $(C^*(G)/J_G)^\mathcal{U}\cap
(C^*(G)/J_G)'.$  Therefore, again by \cite[Theorem 7.2.2]{Rordam02} it follows that $C^*(G)/J_G$ is
$\mathcal{Z}$-stable.  By Theorem \ref{thm:bigol}, $C^*(G)/J_G$ has decomposition rank at most 3.
\\\\
\textbf{Case 2:} The ideal $J_G$ is not maximal.
Recall the definition of $\sigma$ from the beginning of the proof.
\\\\
For each $i=0,...,p-1$ define the representations of $H$,  $\sigma_i(h)=\sigma(x^i h x^{-i}).$  By \cite[Section
3]{Eckhardt14} either all of $\sigma_i$ are unitarily equivalent to each other or none of them are.  We treat these
cases separately.
\\\\
\textbf{Case 2a:} All of the $\sigma_i$ are unitarily equivalent to each other.
\\\\
By the proof of Lemma 3.4 in \cite{Eckhardt14} it follows that there is a unitary $U\in M_p\otimes C^*(H)/J$ such that
$U(C^*(H)/J)U^*=1_p\otimes (C^*(H)/J).$ Moreover from the same proof there is a projection $q\in M_p\otimes
1_{C^*(H)/J}$ that commutes with $U(\id_{M_p}\otimes \sigma(C^*(G)))U^*$ so $q(U[\id_{M_p}\otimes
\sigma(C^*(G))]U^*)\cong C^*(G)/\textup{ker}(\pi).$ We therefore have a chain of inclusions
\begin{equation*}
q\otimes C^*(H)/J\subseteq q(U[\id_{M_p}\otimes \sigma(C^*(G))]U^*)\subseteq qM_pq\otimes C^*(H)/J.
\end{equation*}
We can now complete the proof as in the end of Case 1b (following nearly verbatim everything that follows
(\ref{eq:twistCH})).
\\\\
\textbf{Case 2b.} None of the $\sigma_i$ are unitarily equivalent to each other. 
\\\\
From the proof of Lemma 3.5 in \cite{Eckhardt14} there is a projection $q\in \ell^\infty(p)\otimes 1_{C^*(H)/J}$ that
commutes with $(\id_{M_p}\otimes \sigma)(C^*(G))$ so $q(\id_{M_p}\otimes \sigma(C^*(G)))\cong C^*(G)/\textup{ker}(\pi).$
But since $G$ acts transitively on $G/H$ (and hence ergodically on $\ell^\infty(G/H)$) we must have $q=1.$  But this
implies that $\textup{ker}(\id_{M_p}\otimes \sigma|_{C^*(G)})=\textup{ker}(\pi).$ 

Recall the coset representatives $e,x,x^2,...,x^{p-1}$ of $G/H.$ Each $x\in C^*(G)$ can be written uniquely as
$\sum_{i=0}^{p-1}a_i\lambda_{x^i}$ for some $a_i\in C^*(H).$ Fix an $0\leq i\leq p-1$ and consider $\lambda_{x^i}\in
M_p\otimes C^*(H).$ If there is an index $(k,\ell)$ such that the $(k,\ell)$-entry of $\lambda_{x^i}$ is non-zero , then
for any $j\neq i$ the $(k,\ell)$-entry of $\lambda_{x^j}$ must be 0.   From this observation it follows that 
\begin{equation*}
  \id_p\otimes\sigma\Big(\sum_{i=0}^{p-1}a_i\lambda_{x^i}\Big)=0, \textrm{ if and only if }\quad
  (\id_p\otimes\sigma)(a_i)=0 \quad \textrm{ for all }1\leq i\leq p
\end{equation*}
In other words $\textup{ker}(\pi)=\textup{ker}(\id_{M_p}\otimes \sigma|_{C^*(G)})\subseteq J_G$
and we are done by Case 1. 

\end{proof}
\begin{lemma} \label{lem:thoma}
Let $G$ be a finitely generated nilpotent group and $N$ a finite index subgroup of $Z(G)$ and $\phi$ a faithful
multiplicative character on $N.$  Then there is a finite set $\mf$ of characters of $G$ such that $\widetilde{\phi}$
(see Definition \ref{def:GNS}) is in the convex hull of $\mf.$
\end{lemma}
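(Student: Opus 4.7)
My plan is to extend $\phi$ to characters of $Z(G)$ via Fourier analysis on the finite abelian quotient $Z(G)/N$, then invoke Lemma~\ref{lem:centervanish} in the torsion-free case, and pass to the FC-center $G_f$ for the general case.

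Write $k=[Z(G):N]$ and let $\psi_1,\ldots,\psi_k$ denote the $k$ multiplicative characters of $Z(G)$ extending $\phi$. Orthogonality of characters on the finite abelian group $Z(G)/N$ gives
\[
  \phi(z)\mathbf{1}_N(z)=\frac{1}{k}\sum_{i=1}^k\psi_i(z)\quad\text{for }z\in Z(G),
\]
and since both sides vanish on $G\setminus Z(G)$, this extends to $\widetilde{\phi}=\frac{1}{k}\sum_i\widetilde{\psi_i}$ on $G$. Each $\ker\psi_i$ meets $N$ trivially (because $\phi$ is faithful), so $\ker\psi_i$ embeds into $Z(G)/N$ and is a finite central subgroup of $G$. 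When $G$ is torsion-free, $Z(G)$ is torsion-free, so each $\ker\psi_i=\{e\}$ and $\psi_i$ is faithful on $Z(G)$; Lemma~\ref{lem:centervanish} then implies each $\widetilde{\psi_i}$ is a character of $G$, and $\mf=\{\widetilde{\psi_1},\ldots,\widetilde{\psi_k}\}$ suffices.

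For general $G$ the $\widetilde{\psi_i}$ may fail to be extremal. I would pass to the FC-center $H=G_f$: by Baer's theorem $[H:Z(G)]<\infty$, so $[H:N]<\infty$, and the restriction $\rho=\widetilde{\phi}|_H$ is a trace on $H$ whose GNS representation acts on the finite-dimensional space $\ell^2(H/N)$. Hence $C^*(H,\rho)$ is a finite-dimensional C*-algebra and $\rho=\sum_j c_j\chi_j$ is a finite convex combination of characters $\chi_j$ of $H$, each satisfying $\chi_j|_N=\phi$ by linear independence of distinct multiplicative characters of $N$. Since trivial extension is linear, $\widetilde{\phi}=\sum_j c_j\widetilde{\chi_j}$ where now $\widetilde{\chi_j}$ denotes trivial extension from $H$ to $G$; the proof reduces to decomposing each $\widetilde{\chi_j}$ into finitely many characters of $G$.

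The main obstacle is this last step. My approach is to apply the Howe--Carey classification of characters of f.g.\ nilpotent groups (\cite{Howe77,Carey84}, referenced in Section~\ref{sec:nilfacts}): characters of $G$ correspond bijectively to $G$-conjugation orbits of characters of $G_f$ via trivial-extension-of-orbit-average. The $G$-action on the finite set $\{\chi_j\}$ has finite orbits, and $G$-invariance of $\rho$ forces the $c_j$ to be constant on orbits; grouping then yields $\widetilde{\phi}=\sum_{\mathcal{O}} d_\mathcal{O}\widetilde{\bar{\chi}_\mathcal{O}}$ with orbit-averages $\bar{\chi}_\mathcal{O}=|\mathcal{O}|^{-1}\sum_{\chi\in\mathcal{O}}\chi$, each of whose trivial extensions is a character of $G$ by the Howe--Carey correspondence. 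Extremality can alternatively be verified directly via Howe's centrally-inductive vanishing theorem, which forces any character of $G$ with the relevant $Z(G)$-data to be supported on $G_f$.
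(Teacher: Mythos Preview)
Your approach is essentially the paper's: restrict $\widetilde{\phi}$ to $G_f$, use finite-dimensionality of the GNS representation (from $[G_f:N]<\infty$ and $\pi(N)\subseteq\mathbb C$) to write it as a finite convex combination of characters of $G_f$, then pass to $G$-orbit averages and argue via central inductivity that their trivial extensions are characters of $G$. Your orbit-grouping via $G$-invariance of $\rho$ is exactly the paper's averaging step, just organized differently, and the torsion-free shortcut through Lemma~\ref{lem:centervanish} is a pleasant but inessential addition.

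One caution: the ``bijective Howe--Carey correspondence between characters of $G$ and $G$-orbits of characters of $G_f$'' that you invoke is not a general theorem. Characters of $G$ need not be supported on $G_f$---the trivial character, for instance, whenever $G_f\neq G$---so no such bijection exists. What \cite{Howe77,Carey84} actually give is central inductivity: a character $\omega$ vanishes off $G_f(\omega)=\{x:xK(\omega)\in(G/K(\omega))_f\}$, and one only gets $G_f(\omega)=G_f$ after showing $K(\omega)$ is finite. That finiteness is precisely where the faithfulness of $\phi$ on $N$ enters (forcing $K(\omega)\cap N=\{e\}$, hence $h(K(\omega)\cap Z(G))=0$, hence $K(\omega)\subseteq G_f$ and $K(\omega)$ finite). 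Your ``alternative'' via the centrally-inductive vanishing theorem is therefore not an alternative but the actual argument, and it is exactly what the paper does; the correspondence you cite as primary justification should be dropped.
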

\begin{proof}
This result follows from Thoma's work on characters \cite{Thoma64} (see also the discussion on page 355 of
\cite{Kaniuth06}).  For the convenience of the reader we outline a proof and keep the notation of \cite{Kaniuth06}.  Let
$G_f\leq G$ denote the group consisting of elements with finite conjugacy class. By \cite{Baer48}  $Z(G)$ (and hence
$N$) has finite index in $G_f.$

Let $\pi$ be the GNS representation of $G_f$ associated with $\widetilde{\phi}|_{G_f}.$  Since $N$ has finite index in
$G_f$ and $\pi(N)\subseteq \C$ it follows that $\pi(G_f)$ generates a finite dimensional C*-algebra. Therefore there are
finitely many characters $\omega_1,...,\omega_n$ of $G_f$ that extend $\phi$ (trivially as $\pi(N)\subseteq \C$) and a
sequence of positive scalars $\lambda_i$ such that  
\begin{equation}
\widetilde{\phi}|_{G_f}=\sum_{i=1}^n\lambda_i\omega_i \label{eq:convcomb}
\end{equation}
The positive definite functions $\widetilde{\omega}_i$ on $G$ need not be traces, but this is easily remedied  by the
following averaging process (which we took from \cite{Kaniuth06} and \cite{Thoma64}).

Let $x\in G_f.$  Then the centralizer of $x$ in $G$, denoted $C_G(x)$, has finite index.  Let $A_x$ be a complete set
of coset representatives of $G/C_G(x).$  For each $1\leq i\leq n$ and $x\in G_f$ define
\begin{equation}
  \widetilde{\omega}_i^G(x)=\frac{1}{[G:C_G(x)]}\sum_{a\in A_x}\widetilde{\omega}_i(axa^{-1}). \label{eq:inducedtrace}
\end{equation}
Then each $\widetilde{\omega}_i^G$ is extreme in the space of $G$-invariant traces on $G_f$ (see \cite[Page
355]{Kaniuth06}).  Since each $\widetilde{\omega}_i^G$ is $G$-invariant, the trivial extension to $G$ (which we still
denote by $\widetilde{\omega}_i^G$) is a trace on $G.$ We show that the $\widetilde{\omega}_i^G$ are actually characters
on $G.$

By a standard convexity argument, there is a character $\omega$ on $G$ which extends $\widetilde{\omega_i}^G$.  Let
$K(\omega)=\{ x\in G: \omega(x)=1 \}.$  Then $K(\omega)\subseteq G_f.$  Indeed if there is a $g \in K(\omega)\setminus
G_f$, then $g$ necessarily has infinite order (otherwise the torsion subgroup of $G$ would be infinite).  Since every
finitely generated nilpotent group has a finite index torsion free subgroup and every nontrivial subgroup of a nilpotent
group intersects the center non-trivially,  this would force $K(\omega)\cap Z(G)$ to have non-zero Hirsch number, but by
assumption $\phi=\omega|_N$ is faithful on a finite index subgroup of $Z(G).$ 

Since $G$ is finitely generated and nilpotent, it is centrally inductive (see \cite{Carey84}).  This means that
$\omega$ vanishes outside of $G_f(\omega)=\{ x\in G: xK(\omega)\in (G/K(\omega))_f  \}.$  But since $K(\omega)\subseteq
G_f$ and $\omega|_N=\phi$, it follows that $K(\omega)$ is finite.  From this it follows that $G_f(\omega)=G_f,$ i.e.
$\omega$ must vanish outside of $G_f.$ But this means precisely that $\omega=\widetilde{\omega}_i^G.$  By
(\ref{eq:convcomb}) and (\ref{eq:inducedtrace}) for $x\in G_f$ we have
\begin{align*}
  \sum_{i=1}^n\lambda_i\widetilde{\omega}_i^G(x)&=\sum_{i=1}^n\lambda_i\Big( \frac{1}{[G:C_G(x)]}\sum_{a\in A_x}\widetilde{\omega}_i(axa^{-1})    \Big)\\
    &=\frac{1}{[G:C_G(x)]}\sum_{a\in A_x}\Big( \sum_{i=1}^n\lambda_i \widetilde{\omega}_i(axa^{-1})  \Big)\\
    &=\frac{1}{[G:C_G(x)]}\sum_{a\in A_x}\widetilde{\phi}(x)\\
    &= \widetilde{\phi}(x).
\end{align*}
For $x\not\in G_f$ we clearly have $\sum_{i=1}^n\lambda_i\widetilde{\omega}_i^G(x)=0=\widetilde{\phi}(x).$

\end{proof}

\begin{lemma} \label{lem:findex}
Set $f(n)=10^{n-1}n!.$ Let $G$ be a finitely generated nilpotent group. Let $H\unlhd G$ be normal of finite index.  If
$\nd(C^*(H/N))\leq f(h(H/N))$ for every normal subgroup of $H$, then $\nd(C^*(G/K))\leq f(h(G/K))$ for every normal
subgroup of $G.$
\end{lemma}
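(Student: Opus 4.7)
The plan is to reduce to the case $K=\{e\}$ and then, arguing by induction on $h(G)$, combine Theorem~\ref{thm:findexp} with the continuous field decomposition of Theorem~\ref{thm:PackRae}. First I would fix a normal subgroup $K\unlhd G$ and pass to $G'=G/K$ with its finite-index normal subgroup $H'=HK/K\cong H/(H\cap K)$. The hypothesis on $H$ transfers to $H'$, since every normal subgroup of $H'$ lifts to a normal subgroup of $H$ containing $H\cap K$ with the same quotient. Thus one may assume $K=\{e\}$ and prove $\nd(C^*(G))\leq f(h(G))$ by induction on $h(G)$, with the base case $h(G)=0$ immediate as $C^*(G)$ is then finite-dimensional.

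For the inductive step, apply Theorem~\ref{thm:PackRae} to realize $C^*(G)$ as a continuous field over $\widehat{Z(G)}$, whose covering dimension equals the torsion-free rank of $Z(G)$ and is hence at most $h(Z(G))\leq h(G)$. Theorem~\ref{thm:jose} then yields $\nd(C^*(G))\leq (h(G)+1)(\sup_\gamma\nd(C^*(G,\widetilde{\gamma}))+1)-1$, so the task reduces to bounding each fiber by $\max(f(h(G)-1),3)$; the resulting estimate $(h(G)+1)(\max(f(h(G)-1),3)+1)-1\leq f(h(G))$ is a direct consequence of the recursion $f(n)=10n\cdot f(n-1)$, with the small case $h(G)=1$ handled separately by observing that in that case the fibers are finite-dimensional.

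To bound a fiber $C^*(G,\widetilde{\gamma})$, split on the kernel $K(\gamma)=\ker(\gamma)\leq Z(G)$. If $h(K(\gamma))\geq 1$, the fiber factors through $C^*(G/K(\gamma))$, a group of strictly smaller Hirsch number; the inductive hypothesis applied to the pair $(G/K(\gamma),HK(\gamma)/K(\gamma))$, whose hypothesis is inherited since quotients of $HK(\gamma)/K(\gamma)\cong H/(H\cap K(\gamma))$ are quotients of $H$, yields the bound $f(h(G)-1)$. If $K(\gamma)$ is finite, first quotient out $K(\gamma)$ (which does not change the Hirsch number) to reduce to the case $\gamma$ faithful on $Z(G)$; then Lemma~\ref{lem:thoma} expresses $\widetilde{\gamma}$ as a convex combination of finitely many characters $\omega_1,\dots,\omega_k$ of $G$. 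A standard GNS argument, together with Howe's theorem (Section~\ref{sec:nilfacts}) that distinct characters of a finitely generated nilpotent group have distinct primitive kernels, identifies $C^*(G,\widetilde{\gamma})$ with the finite direct sum $\bigoplus_i C^*(G,\omega_i)$, each summand having decomposition rank at most $3$ by Theorem~\ref{thm:findexp}.

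The main technical obstacle is the finite-kernel case. Quotienting by the finite central subgroup $K(\gamma)$ can enlarge the center of the group, so one must verify that after this reduction the newly faithful character still fits the hypothesis of Lemma~\ref{lem:thoma} on a suitable finite-index subgroup of the new center; equivalently, one must reconcile the trivial extension of $\gamma$ from $Z(G)$ with its trivial extension from the torsion-free part. One must also confirm that the GNS image of $\widetilde{\gamma}$ is genuinely the full direct sum $\bigoplus_i C^*(G,\omega_i)$ rather than merely a subalgebra thereof, which follows from the fact that the kernels of the GNS representations of distinct $\omega_i$ are pairwise distinct maximal ideals. Once these points are settled, the final arithmetic verifying $(h(G)+1)(\max(f(h(G)-1),3)+1)-1\leq f(h(G))$ is routine.
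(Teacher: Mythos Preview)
Your proposal is correct and follows essentially the same strategy as the paper's proof: induction on $h(G)$, reduction to $K=\{e\}$, decomposition as a continuous field over $\widehat{Z(G)}$, splitting on whether $\ker(\gamma)$ has positive Hirsch number, and in the finite-kernel case passing to $G/\ker(\gamma)$ and applying Lemma~\ref{lem:thoma} together with Theorem~\ref{thm:findexp} to get the fiber as a finite direct sum of primitive quotients. The only cosmetic difference is that the paper isolates the case $G/Z(G)$ finite at the outset (yielding $\dim\widehat{Z(G)}\le h(G)-1$ in the remaining case), whereas you keep the coarser bound $\dim\widehat{Z(G)}\le h(G)$ and treat $h(G)=1$ separately; both routes feed into the same recursion $f(n)=10n\,f(n-1)$.
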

\begin{proof}
We proceed by induction on $h(G).$  If $h(G)=0$, then $G$ is finite and there is nothing to prove.  So assume that for
every finitely generated nilpotent group $A$ with $h(A)<h(G)$ that satisfies $\nd(C^*(A/N))\leq f(h(A/N))$ for every
normal subgroup $N$ of $A$, we have $\nd(C^*(A'/N'))\leq f(h(A'/N'))$ where $A'$ is a finite normal extension of $A$ and $N'$
is a normal subgroup of $A'.$

Let now $H$ be a finite index normal subgroup of $G$ that satisfies the hypotheses. If $G/Z(G)$ is finite, then
$\nd(C^*(G))=h(G)\leq f(h(G))$ by Theorem \ref{thm:jose}.  Suppose that $G/Z(G)$ is infinite, i.e. $h(Z(G))<h(G).$ Since
for any quotient $G/K$ of $G$, the group $H/(H\cap K)$ has finite index in $G/K$ it suffices to show that
$\nd(C^*(G))\leq f(G).$

We use Theorem \ref{thm:PackRae} to view $C^*(G)$ as a continuous field over $\widehat{Z(G)}.$  We estimate the nuclear
dimension of the fibers. Let $\gamma\in \widehat{Z(G)}.$ Suppose first that $h(\textup{ker}(\gamma))>0.$  The fiber
$C^*(G, \widetilde{\gamma} )$ is a quotient of the group C*-algebra $C^*(G/\textup{ker}(\gamma)).$  By our induction
hypothesis,
\begin{equation*}
\nd(C^*(G/\textup{ker}(\gamma)))\leq f(h(G)-1).
\end{equation*}
Now suppose on the other hand that $F=\textup{ker}(\gamma)$ is finite. If $x\in G$ with $[x,y]\in F$ for all $y\in G$, then
$yx^{|F|}y^{-1}=x^{|F|}$ for all $y\in G,$ i.e. $x^{|F|}\in Z(G).$  Hence $Z(G/F)/ (Z(G)/F)$ is a finitely generated,
nilpotent torsion group, hence is finite. 

We therefore replace $G$ with $G/F$ and suppose that $Z(G)$ contains a finite index subgroup $N$ such that $\gamma$ is a
faithful homomorphism on $N$ (Notice we can not say that $\gamma$ is faithful on $Z(G)$ since it may not be the case
that $Z(G/F)=(Z(G)/F)$). By Lemma \ref{lem:thoma}, there are finitely many distinct characters $\omega_1,...,\omega_n$
on $G$ such that $\widetilde{\gamma}$ is a convex combination of the $\omega_i.$  The GNS representation associated with
$\widetilde{\gamma}$ is then the direct sum of the GNS representations associated with the $\omega_i.$ Let $J_i$ be the
maximal ideal of $C^*(G)$ induced by $\omega_i$ (Section \ref{sec:nilfacts}).  Since all of the $J_i$ are maximal and
distinct it follows (from purely algebraic considerations) that
\begin{equation*}
 C^*(G,\widetilde{\gamma})\cong \bigoplus_{i=1}^n C^*(G,\omega_i).
\end{equation*}
Since each $\omega_i$ is a character on $G$, it follows by Theorem \ref{thm:findexp} that the decomposition rank of
$C^*(G,\omega_i)$ is bounded by 3 which also bounds the decomposition rank of $C^*(G,\widetilde{\gamma})$ by 3. 

Since the nuclear dimension of all the fibers of $C^*(G)$ are bounded by $f(h(G)-1)$, by Theorem~\ref{thm:jose} we have
$\nd(C^*(G))\leq 2h(G) f(h(G)-1) \le f(h(G))$.
\end{proof}

\section{Main Result} \label{sec:manicuredhands}
The work of Matui and Sato on strongly outer actions (see Theorem \ref{thm:souter}) and of Hirshberg, Winter and
Zacharias \cite{Hirshberg12} on Rokhlin dimension are both crucial to the proof of our main result.  In our case, their
results turned  extremely difficult problems into ones with more or less straightforward solutions.
\begin{lemma} \label{lem:roots}
  Let $\alpha$ be an outer automorphism of a torsion free nilpotent group $G$.  Then for every $a\in G$, the following
  set is infinite.
  \begin{equation*}
    \{  s^{-1}a\alpha(s): s\in G \}.
  \end{equation*}
\end{lemma}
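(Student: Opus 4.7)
The plan is to recognize $\{s^{-1}a\alpha(s) : s\in G\}$ as the orbit of $a$ under the right action of $G$ on itself defined by $s\cdot x = s^{-1}x\alpha(s)$, and then to argue that the stabilizer of $a$ cannot have finite index.  A direct computation gives
\[
  H := \{s\in G : s^{-1}a\alpha(s)=a\} = \{s\in G : \alpha(s) = a^{-1}sa\},
\]
so $H$ is exactly the equalizer of $\alpha$ with the inner automorphism $\beta(s) := a^{-1}sa$.  By orbit--stabilizer the orbit is finite if and only if $[G:H]<\infty$, so it suffices to assume this and deduce $\alpha=\beta$, contradicting the outerness of $\alpha$.

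Setting $\gamma := \beta^{-1}\alpha \in \textup{Aut}(G)$, the problem reduces to showing that an automorphism $\gamma$ of a torsion-free nilpotent group which fixes a finite-index subgroup $H$ pointwise must be the identity.  The key ingredient is the classical lemma that roots are unique in torsion-free nilpotent groups: if $x^n = y^n$ with $n\geq 1$, then $x=y$.  I would prove this by induction on the nilpotency class, using that $Z(G)$ is torsion-free abelian (so uniqueness is immediate there) and that $G/Z(G)$ is torsion-free nilpotent of strictly smaller class, by the Mal'cev theorem already recorded in Section~\ref{sec:ngfacts}; the inductive step descends $x^n=y^n$ to $G/Z(G)$ to get $x=yz$ with $z\in Z(G)$, then $z^n=e$ forces $z=e$.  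Granting this, for any $s\in G$ the cosets $sH, s^2H, s^3H, \ldots$ in $G/H$ must repeat, so some positive power $s^n$ lies in $H$; hence $\gamma(s)^n = \gamma(s^n) = s^n$, and uniqueness of roots gives $\gamma(s)=s$.  Therefore $\gamma = \id$, $\alpha = \beta$ is inner, and we have the desired contradiction.

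The main obstacle, and really the entire content of the argument, is the unique-roots lemma for torsion-free nilpotent groups; once this is in hand, the rest is purely formal bookkeeping via orbit--stabilizer and the observation that conjugation by $a^{-1}$ is an inner automorphism.
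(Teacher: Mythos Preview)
Your proof is correct and is essentially the same argument as the paper's, just phrased in orbit--stabilizer language: where you pass from $[G:H]<\infty$ to ``some power $s^k$ lies in $H$,'' the paper applies pigeonhole directly to the sequence $s^{-n}a\alpha(s^n)$ to obtain $\alpha(s^{n-m})=a^{-1}s^{n-m}a$, and both conclude via unique roots in torsion-free nilpotent groups. The paper simply cites Mal'cev and Baumslag for unique roots rather than sketching the inductive proof you outline.
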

\begin{proof}
Suppose that for some $a$, the above set is finite.  Then for any $s\in G$ there are $0\leq m<n$ so that
\begin{equation*}
  s^{-m}a\alpha(s^m)=s^{-n}a\alpha(s^n)\quad \textrm{ or }\quad a^{-1}s^{n-m}a=\alpha(s^{n-m}).
\end{equation*}
Therefore
\begin{equation*}
  s^{n-m}=a\alpha(s^{n-m})a^{-1}=(a\alpha(s)a^{-1})^{n-m}.
\end{equation*}
Since $G$ is nilpotent and torsion free it has unique roots (see \cite{Malcev49} or \cite[Lemma 2.1]{Baumslag71}), i.e.
$s=a\alpha(s)a^{-1}$, or $\alpha$ is an inner automorphism.
\end{proof}
\begin{lemma} \label{lem:strongouter}
Let $G$ be a torsion free nilpotent group of class $n\geq3.$  Suppose that $G=\la N,x  \ra$ where $x\in G\setminus
Z_{n-1}(G)$, $N\cap \la x\ra=\{ e\}$ and $Z(G)=Z(N).$  Let $\phi$ be a trace on $G$ that is multiplicative on $Z(G)$ and
that vanishes off of $Z(G).$  Let $\alpha$ be the automorphism of $C^*(N,\phi|_N)$ induced by conjugation by $x.$  Then
$\alpha$ is strongly outer.
\end{lemma}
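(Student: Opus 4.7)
My plan is to assume, toward a contradiction, that $\alpha^k$ is inner on $M := \pi_{\phi|_N}(C^*(N,\phi|_N))''$ for some $k\neq 0$, implemented by a unitary $W\in M$, and derive a contradiction from the $L^2$ structure of $M$ together with the isolated-centralizer property of torsion-free nilpotent groups (the same property that powers Lemma~\ref{lem:roots}).  As a first reduction I would show $\alpha^k=\operatorname{Ad}(x^k)|_N$ is outer as an automorphism of the group $N$: if $\operatorname{Ad}(x^k)|_N=\operatorname{Ad}(m)|_N$ for some $m\in N$ then evaluating this equation at $m$ gives $[x^k,m]=e$, isolated centralizers upgrade this to $[x,m]=e$, and then $m^{-1}x^k$ commutes with both $N$ and $x$, placing it in $Z(G)=Z(N)\subseteq N$ and contradicting $x^k\notin N$.

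Next, expand $W=\sum_{c\in C}w_c\lambda_c$ in the orthonormal basis of $L^2(M,\tau)$ indexed by coset representatives $C$ of $N/Z(N)$; this basis is available because $\phi|_N$ vanishes off $Z(N)$ and is multiplicative there.  The commutation $W\lambda_n=\lambda_{\alpha^k(n)}W$ together with the tracial property gives $\tau(\lambda_nW)=\tau(\lambda_{\alpha^k(n)}W)$, which unpacks to $|w_{\bar d}|=|w_{\bar\alpha^k(\bar d)}|$ for every $\bar d\in N/Z(N)$, where $\bar\alpha^k$ is the induced automorphism.  Thus $|w_\bullet|$ is constant on the $\bar\alpha^k$-orbits in $N/Z(N)$, so by $L^2$-summability $w_{\bar d}=0$ whenever that orbit is infinite.

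To use this I need control of the finite orbits.  The hypothesis $x\notin Z_{n-1}(G)$ unpacks to $[x,N]\not\subseteq Z_{n-2}(G)$, and since $Z(N)=Z(G)\subseteq Z_{n-2}(G)$ this gives $x\notin C_G(N;Z(N)):=\{g:[g,N]\subseteq Z(N)\}$; isolated centralizers applied inside the torsion-free nilpotent quotient $G/Z(N)$ promote this to $x^k\notin C_G(N;Z(N))$ for every $k\neq 0$, so $\bar\alpha^k$ is a nontrivial automorphism of $N/Z(N)$.  Because $\operatorname{Ad}(x)$ is unipotent on the Malcev completion of $N$ (any element of a nilpotent Lie algebra has nilpotent adjoint), the fixed-point subgroup $F:=\operatorname{Fix}(\bar\alpha^k)$ coincides with $\operatorname{Fix}(\bar\alpha)$ and all orbits off $F$ are infinite; hence $w$ is supported on $F$.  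To annihilate the remaining coefficients I would revisit $W\lambda_n=\lambda_{\alpha^k(n)}W$ at the level of Fourier coefficients on the $F$-support: unipotence of $\bar\alpha^k-1$ forces its top nonzero iterate $\operatorname{Im}((\bar\alpha^k-1)^{m-1})$ to lie inside $F$, and matching coefficients yields $|w_{\bar d}|=|w_{\bar d\cdot\bar t}|$ for every $\bar d\in F$ and every $\bar t$ in this nontrivial (hence infinite) subgroup; summability then forces $W=0$, contradicting unitarity.

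The sharpest technical point is this last annihilation.  When $G$ has class exactly $3$ the relation $(\bar\alpha^k-1)^2=0$ (a consequence of $\gamma_3(G)\subseteq Z(G)$) makes $\operatorname{Im}(\bar\alpha^k-1)\subseteq F$ immediate and the Fourier-matching argument goes through cleanly; in higher class one must iterate with the nilpotency degree of $\bar\alpha^k-1$ together with the nonabelian conjugation structure of $N/Z(N)$, which is where the real bookkeeping lives.
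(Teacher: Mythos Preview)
Your setup and first reductions are sound, but the proposal has a real gap at the final annihilation step, and the detour that leads you there is unnecessary. In your second step you extract only the trace identity $\tau(\lambda_n W)=\tau(\lambda_{\alpha^k(n)}W)$, which yields merely $|w_{\bar d}|=|w_{\bar\alpha^k(\bar d)}|$; this forces the case split on the fixed set $F=\operatorname{Fix}(\bar\alpha^k)$ and then the ``top nonzero iterate'' argument. That last step is the problem: when $N/Z(N)$ is nonabelian (the generic situation for class $\ge 3$), the map $\bar n\mapsto(\bar\alpha^k-1)(\bar n)$ is not a homomorphism, so ``$\operatorname{Im}((\bar\alpha^k-1)^{m-1})$'' has no evident group structure, and the actual coefficient-matching coming from $W\lambda_n=\lambda_{\alpha^k(n)}W$ gives $|w_{\bar d}|=|w_{\overline{\alpha^k(n)^{-1}\,d\,n}}|$, which for $\bar d\in F$ involves the $N$-conjugate $n^{-1}dn$ as well as $\alpha^k(n)^{-1}n$. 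You acknowledge this is ``where the real bookkeeping lives,'' but no mechanism is offered to close it.

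The paper bypasses the fixed-point analysis entirely by using the full Fourier relation from the start: the commutation $W\pi_\phi(s)=\pi_\phi(\alpha(s))W$ gives
\[
\langle W,\delta_a\rangle=\langle W,\delta_{\alpha(s)^{-1}as}\rangle\qquad\text{for \emph{all} }s\in N,
\]
and then Lemma~\ref{lem:roots}, applied to the induced automorphism on the torsion-free nilpotent group $N/Z(N)$, says the set $\{\alpha(s)^{-1}as\,Z(N):s\in N\}$ is infinite for every $a$; $L^2$-summability then kills $\langle W,\delta_a\rangle$ immediately, with no case split on $F$. For this one needs $\bar\alpha$ to be \emph{outer} on $N/Z(N)$, not just nontrivial, which is all your argument establishes; the paper obtains outerness via a short upper-central-series step: if $\bar\alpha$ were conjugation by some $\bar z\in N/Z(N)$ then $zx\in Z_2(G)\subseteq Z_{n-1}(G)$, forcing $z^{-1}Z_{n-1}(G)=xZ_{n-1}(G)$ and hence $z\notin N$. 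The same works verbatim for $\alpha^k$. So the missing idea is simply to feed the full commutation relation directly into Lemma~\ref{lem:roots}, rather than only its trace.
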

\begin{proof}
  We first show that $\alpha$ induces an outer automorphism of $N/Z(G).$  If not, then there is a $z\in N$ such that
\begin{equation*}
xax^{-1}Z(G)=z^{-1}azZ(G) \quad \textrm{ for all }a\in N.
\end{equation*}
In other words $zx\in Z_2(G).$  Since $x\not\in Z_{n-1}(G)$ we also have $z\not\in Z_{n-1}(G).$  Then
$z^{-1}Z_{n-1}(G)=xZ_{n-1}(G)$, from which it follows that $z\not\in N.$
\\\\
Since $G$ is torsion free so is $N.$  Since $\phi$ is multiplicative on $Z(N)$ and vanishes outside of $Z(N)$ it follows
that $\phi$ is a character (see Lemma \ref{lem:centervanish}) and therefore $C^*(N,\phi)$ has a unique trace (see
Section \ref{sec:bignil}).  Let $(\pi_\phi,L^2(N,\phi))$ be the GNS representation associated with $\phi.$  By way of
contradiction suppose there is a $W\in \pi_\phi(N)''$ such that $W\pi_\phi(g)W^*=\pi_\phi(\alpha(g))$ for all $g\in N.$

For each $s\in N$ we let $\delta_s\in L^2(N,\phi)$ be its canonical image.  Notice that if $aZ(N)\neq bZ(N)$, then
$\delta_a$ and $\delta_b$ are orthogonal. In particular for any complete choice of coset representatives $C\subseteq N$
for $N/Z(G)$, the set $\{ \delta_c:c\in C  \}$ is an orthonormal basis for $L^2(N,\phi).$ Since $W$ is in the weak
closure of the GNS representation $\pi_\phi$ it is in $L^2(N,\phi)$ with norm 1.  Therefore there is some $a\in N$ such
that $\la W, \delta_a\ra\neq 0.$

We now have, for all $s\in N$,
\begin{align*}
\la W,\delta_a  \ra &=\la W\pi_\phi(s),\delta_{as}  \ra\\
&=\la \pi_\phi(\alpha(s))W,\delta_{as}   \ra\\
&=\la  W,\delta_{\alpha(s)^{-1}as} \ra .
\end{align*}
Since $\alpha$ is outer on $N/Z(N)$, by Lemma \ref{lem:roots}  the set $\{  \alpha(s)^{-1}asZ(N):s\in N \}$ is infinite.
Since distinct cosets provide orthogonal vectors of norm 1, this contradicts the fact that $W\in L^2(N,\phi).$

Essentially the same argument shows that any power of $\alpha$ is also not inner on $\pi_\tau(N)''$, i.e. the action is
strongly outer.
\end{proof}
We refer the reader to the paper \cite{Hirshberg12} for information on Rokhlin dimension of actions on C*-algebras.  For
our purposes we do not even need to know what it is, simply that our actions have finite Rokhlin dimension. Therefore we
omit the somewhat lengthy definition \cite[Definition 2.3]{Hirshberg12}. We do mention the following corollary to the
definition of Rokhlin dimension:  If $\alpha$ is an automorphism of a C*-algebra $A$ and there is an $\alpha$-invariant,
unital subalgebra $B\subseteq Z(A)$ such that the action of $\alpha$ on $B$ has Rokhlin dimension bounded by $d$, then
the action of $\alpha$ on $A$ also has Rokhlin dimension bounded by $d.$

\begin{lemma} \label{lem:rokdim}
Let $G$ be a finitely generated, torsion free nilpotent group.  Suppose that $G=\la N,x \ra$ where $N\lhd G$, $N\cap \la
x \ra = \{e\}$, $Z(G)\subseteq Z(N)$, and $Z(G)\neq Z(N).$ Let $\phi$ be a trace on $G$ that is multiplicative on $Z(G)$
and vanishes off $Z(G).$  Let $\alpha$ be the automorphism of $C^*(N,\phi|_N)$ defined by conjugation by $x.$ Then
the Rokhlin dimension of $\alpha$ is 1.
\end{lemma}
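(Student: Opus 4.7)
My plan is to apply the corollary to the definition of Rokhlin dimension stated just before the lemma: it suffices to produce an $\alpha$-invariant unital C*-subalgebra $B\subseteq Z(C^*(N,\phi|_N))$ on which $\alpha$ has Rokhlin dimension at most $1$.  I will build $B\cong C(\T)$ with $\alpha|_B$ acting as an irrational rotation, and then invoke \cite{Hirshberg12}.

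The heart of the proof will be the construction of an element $z\in Z(N)\setminus Z(G)$ with $c:=[x,z]\in Z(G)\setminus\{e\}$ and $z^k\notin Z(G)$ for all $k\ne 0$.  Two structural facts about $A:=Z(N)/Z(G)$ should combine to supply such a $z$.  First, I claim $A$ is torsion-free: if $z\in Z(N)$ and $z^m\in Z(G)$, then $(gzg^{-1})^m=z^m$ for every $g\in G$, and Mal'cev's uniqueness of roots in torsion-free nilpotent groups \cite{Malcev49} forces $gzg^{-1}=z$, so $z\in Z(G)$.  Since $Z(G)\ne Z(N)$, this gives $A\cong\Z^\ell$ with $\ell\ge 1$.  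Second, writing $\delta(y)=[x,y]$, nilpotency of $G$ gives $\delta(Z(N)\cap Z_i(G))\subseteq Z(N)\cap Z_{i-1}(G)$, so $\delta^{n-1}(Z(N))\subseteq Z(G)$; descending to $A$, this says $\alpha-\id$ is nilpotent.  A nonzero nilpotent operator on $\Z^\ell$ has a nonzero kernel in $\Z^\ell$, which supplies the desired $z$.

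Next I pass to the GNS.  Set $u:=\pi_\phi(z)\in Z(C^*(N,\phi|_N))$.  Because $\phi$ is a character of $G$ (faithfulness of $\gamma:=\phi|_{Z(G)}$ is the relevant case and is essentially guaranteed via Lemma \ref{lem:centervanish}), the GNS is a factor representation, and $Z(G)$ acts by scalars; thus $\pi_\phi(c)=\gamma(c)\cdot 1$.  The identity $\alpha(z)=cz$ then becomes $\alpha(u)=\gamma(c)\,u$.  Torsion-freeness of $Z(G)$ together with faithfulness of $\gamma$ forbids $\gamma(c)$ from being a root of unity, since otherwise $c^m=e$ for some $m\ne 0$.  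Moreover, $z^k\notin Z(G)$ for $k\ne 0$ gives $\phi(z^k)=0$ for $k\ne 0$, so the spectral measure of $u$ on $\T$ is Haar measure, forcing $B:=C^*(u)\cong C(\T)$ with $\alpha|_B$ conjugate to the irrational rotation $R_{\gamma(c)}$.

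Finally, invoking the Rokhlin dimension estimate of Hirshberg--Winter--Zacharias \cite{Hirshberg12}, irrational rotation on $C(\T)$ has Rokhlin dimension at most $1$, and the central-subalgebra observation preceding the lemma lifts this bound to $\alpha$ on $C^*(N,\phi|_N)$.  The main obstacle will be the first, algebraic, step: I need both the torsion-freeness of $A$ (via Mal'cev) and the unipotency of $\alpha-\id$ on $A$ (via nilpotency) to produce an appropriate $z$, and this is exactly where the torsion-free nilpotent hypothesis does its work.  Once $z$ is available, the remainder is a short GNS computation and an appeal to the cited Rokhlin dimension bound.
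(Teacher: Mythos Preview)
Your proposal is correct and follows essentially the same line as the paper's proof.  Both arguments locate an element $y\in Z(N)\setminus Z(G)$ with $\alpha(y)y^{-1}\in Z(G)\setminus\{e\}$ by exploiting the unipotence of $\alpha$ on the free abelian group $Z(N)$, then observe that $\alpha$ acts on $C^*(\pi_\phi(y))\cong C(\T)$ as an irrational rotation (irrational because $Z(G)$ is torsion free and $\phi|_{Z(G)}$ is faithful), and finally invoke \cite[Theorem 6.1]{Hirshberg12} together with the central-subalgebra remark preceding the lemma.

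The differences are cosmetic.  The paper works directly in $Z(N)$ and picks $y$ with $(1-\alpha)(y)\ne 0$ but $(1-\alpha)^2(y)=0$; you instead pass to $A=Z(N)/Z(G)$, prove $A$ is torsion free via Mal'cev's unique roots, and pick $\bar z\in\ker(\alpha-\id)$ on $A$.  These yield the same element.  Your argument that the spectral measure of $u=\pi_\phi(z)$ is Haar (hence $C^*(u)\cong C(\T)$) is more explicit than the paper's bare assertion; alternatively, irrationality of the rotation alone already forces $\mathrm{sp}(u)=\T$, since $\mathrm{sp}(u)$ is a nonempty closed subset of $\T$ invariant under an irrational rotation.

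One small point of phrasing: your justification that $\pi_\phi(c)=\gamma(c)\cdot 1$ via ``the GNS is a factor representation'' is slightly off, since $C^*(N,\phi|_N)$ need not be a factor when $Z(G)\ne Z(N)$.  The correct (and simpler) reason is that $|\phi(c)|=1$, so Cauchy--Schwarz in the GNS forces $\lambda_c=\phi(c)\cdot 1$.  This does not affect the validity of your argument.
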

\begin{proof}
Consider the action of $\alpha$ restricted to $Z(N).$  Since $Z(N)\neq Z(G)$, $\alpha$ is not the identity on $Z(N).$
Since $Z(N)$ is a free abelian group we have $\alpha\in GL(\Z,d)$ where $d$ is the rank of $Z(N).$ Since $G$ is
nilpotent, so is the group $Z(N)\rtimes_\alpha \Z.$  Therefore $(1-\alpha)^d=0.$   In particular there is a $y\in Z(N)$
such that $(1-\alpha)(y)\neq0$ but $(1-\alpha)^2(y)=0.$ From this we deduce that $\alpha(y)=y+z$ for some $z\in
Z(G)\setminus\{ 0 \}.$  

Therefore the action of $\alpha$ on $C^*(\pi_\phi(y))\cong C(\T)$ is a rotation by $\phi(z).$  Since $\phi$ is faithful
we have $\phi(z)=e^{2\pi i\theta}$ for some irrational $\theta.$  By \cite[Theorem 6.1]{Hirshberg12} irrational
rotations of the circle have Rokhlin dimension 1. Since $C^*(\pi_\phi(y))\subseteq Z(C^*(N,\phi))$, the remark preceding
this lemma shows that the Rokhlin dimension of $\alpha$ acting on $C^*(N,\phi)$ is also equal to 1.
\end{proof}
\begin{theorem} \label{thm:mainthm}
Define $f:\N\rightarrow\N$ by $f(n)=10^{n-1} n!.$ Let G be a finitely generated nilpotent group.  Then $\nd(C^*(G))\leq
f(h(G))$.
\end{theorem}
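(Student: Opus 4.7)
The plan is to prove the bound by strong induction on $h(G)$. The base case $h(G)=0$ is immediate since $G$ is then finite. In the inductive step I would first establish the bound for torsion-free $G$ and then pass to the general case: every finitely generated nilpotent group $G$ has a finite-index torsion-free normal subgroup $H$, and once $\nd(C^*(H/N)) \leq f(h(H/N))$ is known for all normal $N\leq H$ (by the torsion-free case just established together with the inductive hypothesis for nontrivial $N$), Lemma \ref{lem:findex} yields the bound for $G$ itself.

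For torsion-free $G$, I would invoke Theorem \ref{thm:PackRae} to realize $C^*(G)$ as a continuous field over $\widehat{Z(G)}\cong\T^{h(Z(G))}$, a compact space of covering dimension at most $h(G)$, and reduce to fiber estimates via Theorem \ref{thm:jose}. For $\gamma\in\widehat{Z(G)}$ with $\ker(\gamma)$ nontrivial, the fiber $C^*(G,\widetilde{\gamma})$ is a quotient of $C^*(G/\ker\gamma)$, which has strictly smaller Hirsch number, so the inductive hypothesis bounds its nuclear dimension by $f(h(G)-1)$. For $\gamma$ faithful on $Z(G)$, Lemma \ref{lem:centervanish} shows $\widetilde{\gamma}$ is a character, and if $G$ has nilpotency class at most $2$, the fiber is a simple higher-dimensional noncommutative torus, which has decomposition rank at most $1$ by Phillips' theorem.

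The heart of the argument is the case where $G$ has class at least $3$ and $\gamma$ is faithful on $Z(G)$. Here I would write $G = N\rtimes\Z$, taking $N$ to be the preimage in $G$ of a corank-one subgroup of $G/Z_{n-1}(G)$ spanned by all but one of its generators, and $\Z$ generated by the remaining generator $x\notin Z_{n-1}(G)$; this forces $h(N)=h(G)-1$. The fiber $C^*(G,\widetilde{\gamma})$ identifies with $C^*(N,\widetilde{\gamma}|_N)\rtimes_\alpha\Z$, where $\alpha$ is conjugation by $x$. In the subcase $Z(N)=Z(G)$, Lemma \ref{lem:centervanish} makes $\widetilde{\gamma}|_N$ a character of $N$, so $C^*(N,\widetilde{\gamma}|_N)$ is simple with unique trace; the inductive bound $\nd(C^*(N))\leq f(h(G)-1)$ combined with Theorem \ref{thm:bigol} gives property (SI), Lemma \ref{lem:strongouter} gives that $\alpha$ is strongly outer, and Theorem \ref{thm:souter} forces the crossed product to be $\mathcal{Z}$-stable, which together with quasidiagonality from \cite{Eckhardt14} and Theorem \ref{thm:bigol} yields decomposition rank at most $3$. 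In the subcase $Z(N)\neq Z(G)$, Lemma \ref{lem:rokdim} gives that $\alpha$ has Rokhlin dimension $1$, and the main estimate of \cite{Hirshberg12} bounds the crossed product's nuclear dimension by $2(f(h(G)-1)+1)-1$.

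Collecting the fiber estimates and applying Theorem \ref{thm:jose} yields $\nd(C^*(G)) \leq (h(G)+1)(2f(h(G)-1)+2)-1$, and a direct computation with $f(n)=10^{n-1}n!$ confirms this is at most $f(h(G))$ for $h(G)\geq 2$ (the cases $h(G)\leq 1$ are immediate since $G$ is then virtually abelian and Theorem \ref{thm:jose} applies to $C^*(G)$ viewed as a continuous field over $\widehat{Z(G)}$ with finite-dimensional fibers). The main obstacle will be matching the two subcases of the faithful-$\gamma$ analysis to the right dichotomy $Z(N)=Z(G)$ versus $Z(N)\neq Z(G)$ and checking that the crossed product really is the fiber of $C^*(G)$; the group-theoretic construction of $N$ is essentially forced, and the deep external inputs (Matui's strong outerness theorem, Hirshberg-Winter-Zacharias, Phillips) then carry out the genuine C*-algebraic work.
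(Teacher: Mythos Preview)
Your outline is essentially the paper's proof: induction on $h(G)$, reduction to torsion free via Lemma~\ref{lem:findex}, the continuous-field decomposition over $\widehat{Z(G)}$, and the fiber analysis split into non-faithful $\gamma$ (induction), class $\leq 2$ (Phillips), and class $\geq 3$ with the $G=N\rtimes\Z$ decomposition followed by the $Z(N)=Z(G)$ (strongly outer, Matui--Sato) versus $Z(N)\neq Z(G)$ (Rokhlin dimension, \cite{Hirshberg12}) dichotomy.

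The only defects are numerical, but they matter for the stated bound. First, \cite[Theorem~4.1]{Hirshberg12} with Rokhlin dimension $1$ gives $\nd(C^*(N,\widetilde\gamma)\rtimes_\alpha\Z)\leq 8(\nd(C^*(N,\widetilde\gamma))+1)$, not $2(\,\cdot\,)-1$; the factor $2$ would correspond to Rokhlin dimension $0$, which Lemma~\ref{lem:rokdim} does not give you. Second, for non-abelian torsion-free $G$ one has $h(Z(G))\leq h(G)-1$, so $\dim\widehat{Z(G)}\leq h(G)-1$ rather than $h(G)$. With your looser base-space bound and the \emph{correct} Hirshberg--Winter--Zacharias constant the arithmetic no longer closes: for $h(G)=3$ one gets $(h(G)+1)\cdot 8(f(h(G)-1)+1)-1=671>600=f(3)$. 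The paper uses both the sharper base bound $h(G)-1$ and the fiber bound $9f(h(G)-1)$ coming from the factor $8$, so that Theorem~\ref{thm:jose} yields $\nd(C^*(G))\leq h(G)\cdot 10f(h(G)-1)=f(h(G))$. Fix those two constants and your argument is the paper's.
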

\begin{proof}
We proceed by induction on the Hirsch number of $G.$ If $h(G)=0$, there is nothing to prove.  It is well-known that $G$
contains a finite index torsion-free subgroup $N$ (see section 2.1.1).  Therefore by Lemma \ref{lem:findex} we may
assume that $G$ is torsion free. We decompose $C^*(G)$ as a continuous field over $\widehat{Z(G)}$ as in Theorem
\ref{thm:PackRae}.  Let $\gamma\in \widehat{Z(G)}.$  If $\gamma$ is not faithful on $Z(G)$, then since $G$ is torsion
free we have $h(\textup{ker}(\gamma))>0.$ By our induction hypothesis we then have $\nd(C^*(G,\widetilde{\gamma}))\leq
f(h(G)-1).$ 

Suppose now that $\gamma$ is faithful on $Z(G).$ If $G$ is a 2 step nilpotent group, then $C^*(G,\widetilde{\gamma})$ is
a simple higher dimensional noncommutative torus and therefore an A$\T$ algebra by \cite{Phillips06}.  A$\T$ algebras
have nuclear dimension (decomposition rank in fact) bounded by 1 \cite{Kirchberg04}.

Suppose then that $G$ is nilpotent of class $n\geq3$ and let $Z_{i}(G)$ denote its upper central series.  By
\cite{Malcev49}  (see also \cite[Theorem 1.2]{Jennings55} ), the group $G/Z_{n-1}(G)$ is a free abelian group.  Let
$xZ_{n-1}(G),x_1Z_{n-1}(G),...,x_dZ_{n-1}(G)$ be a free basis for $G/Z_{n-1}(G).$ Let $N$ be the group generated by
$Z_{n-1}(G)$ and $\{ x_1,...,x_d \}.$  Then $N$  is a normal subgroup of $G$ with $h(N)=h(G)-1$ and $G=N\rtimes_\alpha
\Z$ where $\alpha$ is conjugation by $x.$ 

Suppose first that $Z(N)=Z(G).$  Since $h(N)=h(G)-1$, the group C*-algebra $C^*(N)$ has finite nuclear dimension by our
induction hypothesis.  Assuming $Z(N)=Z(G)$ means that $\widetilde{\gamma}$ is a character on $N$,  i.e.
$C^*(N,\widetilde{\gamma})$ is primitive quotient of $C^*(N).$ It then enjoys all of the properties of Theorem
\ref{thm:bigol}.

By Lemma \ref{lem:strongouter}, the action of $\alpha$ on $C^*(N,  \widetilde{\gamma} )$ is strongly outer.  By
\cite[Corollary 4.11]{Matui14}, the crossed product $C^*(N,\widetilde{\gamma})\rtimes_\alpha\Z\cong
C^*(G,\widetilde{\gamma})$  is $\mathcal{Z}$-stable, hence $C^*(G,\widetilde{\gamma})$ has decomposition rank bounded by
3 by Theorem \ref{thm:bigol}.

Suppose now that $Z(G)$ is strictly contained in $Z(N).$ By \cite[Theorem 4.1]{Hirshberg12} and Lemma \ref{lem:rokdim}
we have
\begin{equation*}
  \nd(C^*(G,\widetilde{\gamma}))=\nd(C^*(N,\widetilde{\gamma})\rtimes_\alpha \Z)\leq 8(\nd(C^*(N,\widetilde{\gamma}))+1)\leq 9f(h(N)).
\end{equation*}
Therefore the nuclear dimension of every fiber of $C^*(G)$ is bounded by $9f(h(G)-1)$ and the dimension of its center is
at most $h(G)-1$.  By Theorem \ref{thm:jose}, we have
\begin{equation*}
  \nd(C^*(G))\leq 10h(G)f(h(G)-1)=f(h(G)).
\end{equation*}
\end{proof}

\subsection{Application to the Classification Program}

Combining Theorem \ref{thm:mainthm} with results of Lin and Niu \cite{Lin08} and Matui and Sato \cite{Matui12,Matui14a}
(see especially Corollary 6.2 of \cite{Matui14a}) we display the reach of Elliott's classification program:
\begin{theorem} \label{thm:uctmissing}
  Let $G$ be a finitely generated nilpotent group and $J$ a primitive ideal of $C^*(G).$  If $C^*(G)/J$ satisfies the
  universal coefficient theorem, then $C^*(G)/J$ is classifiable by its ordered K-theory and is isomorphic to an
  approximately subhomogeneous C*-algebra.
\end{theorem}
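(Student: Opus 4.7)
The plan is essentially to assemble the ingredients already in hand: Theorem \ref{thm:mainthm} gives finite nuclear dimension for $C^*(G)$, Theorem \ref{thm:bigol} upgrades that to decomposition rank at most $3$ for the simple, nuclear, quasidiagonal, unique-trace primitive quotient, and the UCT hypothesis then unlocks the recent classification machinery of Lin--Niu and Matui--Sato.

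More concretely, I would first verify that $A := C^*(G)/J$ satisfies the hypotheses of Theorem \ref{thm:bigol}. It is unital and separable because $C^*(G)$ is; it is nuclear because $G$ is amenable (so $C^*(G)$ is nuclear by Lance, and nuclearity passes to quotients); it is simple because, by Moore--Rosenberg \cite{Moore76}, every primitive ideal of $C^*(G)$ is maximal; it carries a unique tracial state because, by Howe \cite{Howe77} (see also \cite{Carey84}), the primitive ideal $J$ is induced by a unique character $\phi$ of $G$, which descends to the unique trace on $A$; and it is quasidiagonal by the main result of \cite{Eckhardt14}. These are precisely the hypotheses recorded in Theorem \ref{thm:bigol}.

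Next I would feed in finite nuclear dimension. By Theorem \ref{thm:mainthm}, $\nd(C^*(G)) \le f(h(G)) < \infty$, and nuclear dimension does not increase under quotients, so $\nd(A) < \infty$. By Theorem \ref{thm:bigol}, property (i) then implies property (v), so $A$ has decomposition rank at most $3$. In particular $A$ is $\mathcal{Z}$-stable.

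Finally I would invoke classification. Under the UCT assumption, $A$ is a unital, separable, simple, nuclear, $\mathcal{Z}$-stable C*-algebra with unique trace, finite decomposition rank, and satisfying the UCT. This is exactly the input for Corollary 6.2 of \cite{Matui14a}, which (building on \cite{Lin08,Matui12}) yields that $A$ is classified by its ordered K-theory and is isomorphic to an approximately subhomogeneous C*-algebra. There is no real obstacle to overcome here: the statement is a synthesis of the main theorem with deep results of others, and the only thing to check carefully is that every hypothesis of \cite[Corollary 6.2]{Matui14a} is met by a primitive quotient of a finitely generated nilpotent group C*-algebra together with the UCT assumption.
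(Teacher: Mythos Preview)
Your proposal is correct and matches the paper's approach exactly: the paper does not even write out a separate proof, but simply states that the theorem follows by combining Theorem~\ref{thm:mainthm} with the results of Lin--Niu \cite{Lin08} and Matui--Sato \cite{Matui12,Matui14a} (specifically \cite[Corollary~6.2]{Matui14a}). Your write-up just makes explicit the verification (already recorded in Theorem~\ref{thm:bigol} and Section~\ref{sec:nilfacts}) that a primitive quotient of $C^*(G)$ is unital, separable, simple, nuclear, quasidiagonal with unique trace, and inherits finite nuclear dimension from $C^*(G)$.
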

We do not know if every quotient of a nilpotent group C*-algebra satisfies the UCT, but Rosenberg and Schochet show that
satisfying the UCT is closed under $\Z$-actions which covers most cases of interest;
\begin{theorem} \label{thm:ucttfree}
  Let $G$ be a finitely generated, torsion free nilpotent group and $\pi$ a faithful irreducible representation of $G.$
  Then $C^*(\pi(G))$ is classifiable by its ordered K-theory and is an ASH algebra. 
\end{theorem}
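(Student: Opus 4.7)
The plan is to reduce the statement to showing that $C^*(\pi(G))$ satisfies the UCT; once this is in hand, Theorem \ref{thm:uctmissing} delivers classifiability by ordered K-theory and the ASH description immediately. Since $C^*(\pi(G)) \cong C^*(G)/\ker(\pi)$ is a quotient of $C^*(G)$, and the UCT bootstrap class is closed under taking quotients, it suffices to prove that $C^*(G)$ itself lies in the bootstrap class.

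For this, I would use the fact that a finitely generated torsion-free nilpotent group is poly-$\mathbb{Z}$: there is a subnormal series
\begin{equation*}
\{e\} = G_0 \lhd G_1 \lhd \cdots \lhd G_{h(G)} = G
\end{equation*}
with each quotient $G_{i+1}/G_i \cong \mathbb{Z}$ and each $G_i$ normal in $G$. This can be constructed by refining the upper central series $\{Z_i(G)\}$; by Malcev's theorem each $Z_{i+1}(G)/Z_i(G)$ is finitely generated torsion-free abelian, hence some $\mathbb{Z}^{d_i}$. The intermediate subgroups used in the refinement remain normal in $G$ because any $H$ with $Z_i(G) \subseteq H \subseteq Z_{i+1}(G)$ has $H/Z_i(G)$ sitting inside the center of $G/Z_i(G)$, so conjugation by any $g\in G$ fixes $H/Z_i(G)$ pointwise and hence fixes $H$ setwise.

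Since $\mathbb{Z}$ is free, each extension $1 \to G_i \to G_{i+1} \to \mathbb{Z} \to 1$ splits, so $G_{i+1} \cong G_i \rtimes \mathbb{Z}$ as groups and $C^*(G_{i+1}) \cong C^*(G_i) \rtimes_\alpha \mathbb{Z}$ as C*-algebras, where $\alpha$ is conjugation by a lift of a generator. Starting from $C^*(G_0) = \mathbb{C}$, which is trivially in the bootstrap class, and invoking Rosenberg and Schochet's theorem \cite{Rosenberg87} that the bootstrap class is closed under crossed products by $\mathbb{Z}$, an induction on $i$ shows that $C^*(G)$ lies in the bootstrap class and therefore so does $C^*(\pi(G))$.

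I do not anticipate a serious obstacle: the argument is essentially a marriage of the group-theoretic fact that finitely generated torsion-free nilpotent groups are poly-$\mathbb{Z}$ with Rosenberg and Schochet's permanence property, and the latter does all the analytic work. The faithfulness of $\pi$ plays no role in the UCT verification itself; it is present only to ensure that the ambient group is the given torsion-free $G$ rather than a possibly torsion quotient $G/\ker(\pi)$ for which the construction above would fail at the very first step (and for which Theorem \ref{thm:uctmissing} is also stated in terms of primitive ideals of the original group C*-algebra).
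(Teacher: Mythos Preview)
Your reduction to verifying the UCT via Theorem~\ref{thm:uctmissing} is right, and the argument that $C^*(G)$ lies in the Rosenberg--Schochet bootstrap class $\mathcal{N}$ (via the poly-$\Z$ structure and closure under $\Z$-crossed products) is correct. The gap is the passage from $C^*(G)\in\mathcal{N}$ to $C^*(\pi(G))=C^*(G)/\ker(\pi)\in\mathcal{N}$. The bootstrap class satisfies the two-out-of-three property for short exact sequences, but it is \emph{not} known to be closed under arbitrary quotients: to deduce $C^*(G)/\ker(\pi)\in\mathcal{N}$ by two-out-of-three you would also need $\ker(\pi)\in\mathcal{N}$, which you have not shown and which is not obvious. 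Whether every quotient of a separable nuclear C*-algebra in $\mathcal{N}$ again lies in $\mathcal{N}$ is bound up with the general UCT problem and cannot be invoked as a black box.

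The paper circumvents this by never passing to a quotient. It builds the normal series starting at $Z(G)$ rather than at $\{e\}$, so that the base of the tower is $C^*(\pi(Z(G)))=\C$ (Schur's lemma, since $\pi$ is irreducible), and then argues that each $C^*(\pi(G_i))$ is itself the crossed product $C^*(\pi(G_{i-1}))\rtimes_{\alpha_i}\Z$. Thus $C^*(\pi(G))$ is directly exhibited as an iterated $\Z$-crossed product of $\C$, and \cite[Proposition~2.7]{Rosenberg87} applies immediately with no quotient step. Note that your series beginning at $\{e\}$ cannot be transported to the level of $\pi$ in this way: for $G_1\subseteq Z(G)$ one has $C^*(\pi(G_1))=\C$, not $\C\rtimes\Z\cong C(\T)$, so the crossed-product identification fails below $Z(G)$.
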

\begin{proof}
By Theorem \ref{thm:uctmissing} we only need to show that $C^*(\pi(G))$ satisfies the UCT.  Let $\tau$ be the unique
character inducing $\textup{ker}(\pi)$ (see Section \ref{sec:nilfacts}).  Since $G$ is torsion free so is $G/Z(G)$
(Section \ref{sec:ngfacts}).  Therefore we have a normal series
\begin{equation*}
  Z(G)=G_0\trianglelefteq G_1 \trianglelefteq \cdots \trianglelefteq G_n=G
\end{equation*}
such that $G_i/G_{i-1}\cong \Z$ for all $i=1,...,n.$ Let $x_1,...,x_n\in G$ be such that $x_iG_{i-1}$ generates
$G_i/G_{i-1}$ for $i=1,...,n.$  For $i=1,...,n$ let $\alpha_i$ be the automorphism of $C^*(\pi(G_{i-1}))$ defined by
conjugation by $x_i.$  Since $C^*(\pi(G))$ is an iterated crossed product by the $\Z$-actions $\alpha_i$, a repeated
application of  \cite[Proposition 2.7]{Rosenberg87} shows that $C^*(\pi(G))$ satisfies the UCT.
\end{proof}
\subsubsection{Unitriangular groups and K-theory}
In light of Theorem \ref{thm:ucttfree} it is natural to wonder about  K-theory calculations for specifc groups and
representations.  Let us announce a little progress in this direction.  Let $d\geq 3$ be an integer.  Consider the group
of upper triangular matrices,
\begin{equation*}
  U_d=\Big\{ A\in GL_d(\Z): A_{ii}=1 \textup{ and }A_{ij}=0 \textup{ for }i>j   \Big\}.
\end{equation*} 
The center $Z(U_d)\cong \Z$ is identified with those elements whose only non-zero, non-diagonal entry can occur in the
$(1,d)$ matrix entry.  $U_d$ is a finitely generated $d-1$-step nilpotent group.  Fix an irrational $\theta$ and
consider the character $\tau_\theta$ on $U_d$ induced from the multiplicative character $n\mapsto e^{2\pi n\theta i}.$
Then $C^*(U_d,\tau_\theta)$ is covered by Theorem \ref{thm:ucttfree}.

In the case of $d=4$, we show in \cite{Eckhardt14a}, together with Craig Kleski that the Elliott invariant of
$C^*(U_4,\tau_\theta)$ is given by $K_0=K_1=\Z^{10}$ with the order on $K_0$ given by those vectors $x\in \Z^{10}$ satisfying $\la x, (1,\theta,\theta^2,0,...,0)\ra>0.$

\section{Questions and Comments}
Very broadly this section contains one question:  Is there a group theoretic characterization of finitely generated
groups whose group C*-algebras have finite nuclear dimension? We parcel this into more manageable chunks.

Since finite decomposition rank implies strong quasidiagonality (see \cite{Kirchberg04}) there are many easy examples of
finitely generated group C*-algebras with infinite decomposition rank \cite{Carrion13}. There are also difficult examples of finitely generated amenable groups with infinite nuclear dimension.

 \begin{theorem}[Giol and Kerr \textup{\cite{Giol10}}] The nuclear dimension of $C^*(\Z\wr\Z)$ is infinite.
 \end{theorem}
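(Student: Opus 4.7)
The plan is to realize $C^*(\Z\wr\Z)$ as a crossed product of a concrete commutative C*-algebra by a single automorphism and then transfer the obstruction through a quotient. Via Pontryagin duality, $C^*\bigl(\bigoplus_{n\in\Z}\Z\bigr)\cong C(\T^\Z)$ (the Pontryagin dual of a discrete direct sum is the compact direct product), and the coordinate shift on $\bigoplus_n\Z$ dualizes to the shift homeomorphism $\sigma$ of $\T^\Z$. Hence $C^*(\Z\wr\Z)\cong C(\T^\Z)\rtimes_\sigma\Z$. For any closed shift-invariant subset $X\subseteq\T^\Z$, the restriction map $C(\T^\Z)\to C(X)$ is $\Z$-equivariant and descends to a surjection $C(\T^\Z)\rtimes\Z\twoheadrightarrow C(X)\rtimes\Z$; since nuclear dimension is monotone under quotients, it suffices to exhibit a single such $X$ with $\nd(C(X)\rtimes\Z)=\infty$.

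At this point I would invoke the main construction of \cite{Giol10}. Embedding $\{0,1\}\hookrightarrow\T$ coordinatewise identifies subshifts of $\{0,1\}^\Z$ with closed shift-invariant subsets of $\T^\Z$. Giol and Kerr produce a minimal subshift $X$ of this form whose crossed product $C(X)\rtimes\Z$ is simple, unital, separable, nuclear and stably finite, but has perforated ordered $K_0$, and in particular fails strict comparison. Winter's theorem guarantees that any simple, separable, unital, nuclear C*-algebra of finite nuclear dimension is $\mZ$-stable, and R{\o}rdam's theorem guarantees simple $\mZ$-stable C*-algebras have strict comparison (these are among the implications recorded in Theorem \ref{thm:bigol}). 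Chaining these yields $\nd(C(X)\rtimes\Z)=\infty$, and hence $\nd(C^*(\Z\wr\Z))=\infty$.

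The bulk of the work, and the main obstacle, lies entirely in the perforation construction: engineering the allowed words in a minimal subshift, together with their recurrence rates, so that the associated dimension group $K_0(C(X)\rtimes\Z)$ admits positive elements witnessing a genuine failure of strict comparison is delicate and forms the technical core of \cite{Giol10}. By contrast, the translation from $C^*(\Z\wr\Z)$ to a subshift crossed product via duality and the transfer of the nuclear-dimension lower bound via the quotient map are routine. It is also worth noting that the argument illustrates sharply why our induction in Theorem \ref{thm:mainthm} cannot extend beyond the nilpotent setting: the step of passing to a finitely-generated-nilpotent normal subgroup of strictly smaller Hirsch number has no analogue in $\Z\wr\Z$, whose base $\bigoplus_n\Z$ is already infinitely generated.
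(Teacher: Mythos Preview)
Your argument is correct and follows essentially the same route as the paper's proof: realize the Giol--Kerr subshift crossed products as quotients of $C^*(\Z\wr\Z)\cong C(\T^\Z)\rtimes_\sigma\Z$ and invoke monotonicity of nuclear dimension under quotients. You supply more detail than the paper does---spelling out the Pontryagin-duality identification, the embedding of the finite alphabet into $\T$, and the Winter/R{\o}rdam chain from perforation to infinite nuclear dimension---but the skeleton is identical.
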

 \begin{proof} Giol and Kerr construct several C*-algebras of the form $C(X)\rtimes\Z$ with infinite nuclear dimension.  One notices that some of these algebras are actually quotients of $C^*(\Z\wr\Z)$, forcing the nuclear dimension of $C^*(\Z\wr\Z)$ to be infinite by \cite[Proposition 2.3]{Winter10}.
 \end{proof}

On the other hand if we restrict to
polycyclic groups--given  the finite-dimensional feel of the Hirsch number and the role it played in the present
work--it seems plausible that all of these groups have finite nuclear dimension. Note that in \cite{Eckhardt13b} there are numerous examples of
polycyclic groups that are not strongly quasidiagonal and therefore have infinite decomposition rank. 
\begin{question}
  Are there any polycyclic groups  with infinite nuclear dimension? 
 \end{question}
 \begin{question} Are there any polycyclic, non virtually nilpotent groups with finite decomposition rank or finite nuclear dimension?
\end{question}
 The paper \cite{Eckhardt13b} also provides examples of non virtually nilpotent, polycyclic groups whose group
C*-algebras are strongly quasidiagonal. It seems that these groups could be a good starting point for a general
investigation into nuclear dimension of polycyclic groups.  The difficulty here is that these groups have trivial center
and we therefore do not have a useful continuous field characterization of their group C*-algebras as in Theorem
\ref{thm:PackRae}.

Unfortunately we were unable to extend our results to the virtually nilpotent case, and thus are left with the following
question.
\begin{question}
  Do virtually nilpotent group C*-algebras have finite nuclear dimension?
\end{question}
Finally we have
\begin{question}
  If $G$ is finitely generated and nilpotent, does $C^*(G)$ have finite decomposition rank?
\end{question}
The careful reader will notice that the only part of our proof where we cannot deduce finite decomposition rank is in
the second case of the proof of  Theorem \ref{thm:mainthm}.  There is definitely a need for both cases as there exist
torsion free nilpotent groups $G$ such that whenever $G/N\cong \Z$ with $Z(G)\leq N$, then $Z(N)$ is strictly bigger
than $Z(G)$ (we thank the user YCor on mathoverflow.net for kindly pointing this out to us). In general if a C*-algebra
$A$ has finite decomposition rank and an automorphism has finite Rokhlin dimension, one cannot deduce that the crossed
product has finite decomposition rank (for example, consider $\alpha\otimes \beta$ where $\alpha$ is a shift on a Cantor
space and $\beta$ is an irrational rotation of $\T$).

\bibliographystyle{plain}

\bibliography{mybib}

\begin{thebibliography}{10}

\bibitem{Baer48}
Reinhold Baer.
\newblock Finiteness properties of groups.
\newblock {\em Duke Math. J.}, 15:1021--1032, 1948.

\bibitem{Baumslag71}
Gilbert Baumslag.
\newblock {\em Lecture notes on nilpotent groups}.
\newblock Regional Conference Series in Mathematics, No. 2. American
  Mathematical Society, Providence, R.I., 1971.

\bibitem{Boca97}
Florin~P. Boca.
\newblock The structure of higher-dimensional noncommutative tori and metric
  {D}iophantine approximation.
\newblock {\em J. Reine Angew. Math.}, 492:179--219, 1997.

\bibitem{Brown77}
Lawrence~G. Brown.
\newblock Stable isomorphism of hereditary subalgebras of {$C\sp*$}-algebras.
\newblock {\em Pacific J. Math.}, 71(2):335--348, 1977.

\bibitem{Brown08}
Nathanial~P. Brown and Narutaka Ozawa.
\newblock {\em {$C^*$}-algebras and finite-dimensional approximations},
  volume~88 of {\em Graduate Studies in Mathematics}.
\newblock American Mathematical Society, Providence, RI, 2008.

\bibitem{Carey84}
A.~L. Carey and W.~Moran.
\newblock Characters of nilpotent groups.
\newblock {\em Math. Proc. Cambridge Philos. Soc.}, 96(1):123--137, 1984.

\bibitem{Carrion11}
Jos{\'e}~R. Carri{\'o}n.
\newblock Classification of a class of crossed product {$C^\ast$}-algebras
  associated with residually finite groups.
\newblock {\em J. Funct. Anal.}, 260(9):2815--2825, 2011.

\bibitem{Carrion13}
Jos{\'e}~R. Carri{\'o}n, Marius Dadarlat, and Caleb Eckhardt.
\newblock On groups with quasidiagonal {$C^*$}-algebras.
\newblock {\em J. Funct. Anal.}, 265(1):135--152, 2013.

\bibitem{Eckhardt13b}
Caleb Eckhardt.
\newblock A note on strongly quasidiagonal groups.
\newblock arXiv:1309.2205 [math.OA], \emph{to appear in Journal of Operator
  Theory}, 2013.

\bibitem{Eckhardt14}
Caleb Eckhardt.
\newblock Quasidiagonal representations of nilpotent groups.
\newblock {\em Adv. Math.}, 254:15--32, 2014.

\bibitem{Eckhardt14a}
Caleb Eckhardt, Craig Kleski, and Paul McKenney.
\newblock Classification of {C}*-algebras generated by representations of the
  unitriangular group {U}$_4$.
\newblock In Preparation, 2014.

\bibitem{Elliott93}
George~A. Elliott and David~E. Evans.
\newblock The structure of the irrational rotation {$C^*$}-algebra.
\newblock {\em Ann. of Math. (2)}, 138(3):477--501, 1993.

\bibitem{Elliott08}
George~A. Elliott and Andrew~S. Toms.
\newblock Regularity properties in the classification program for separable
  amenable {$C\sp *$}-algebras.
\newblock {\em Bull. Amer. Math. Soc. (N.S.)}, 45(2):229--245, 2008.

\bibitem{Giol10}
Julien Giol and David Kerr.
\newblock Subshifts and perforation.
\newblock {\em J. Reine Angew. Math.}, 639:107--119, 2010.

\bibitem{Hirshberg12}
Ilan Hirshberg, Wilhelm Winter, and Joachim Zacharias.
\newblock Rokhlin dimension and {C}*-dynamics.
\newblock math.OA/1209.1618, 2012.

\bibitem{Howe77}
Roger~E. Howe.
\newblock On representations of discrete, finitely generated, torsion-free,
  nilpotent groups.
\newblock {\em Pacific J. Math.}, 73(2):281--305, 1977.

\bibitem{Jennings55}
S.~A. Jennings.
\newblock The group ring of a class of infinite nilpotent groups.
\newblock {\em Canad. J. Math.}, 7:169--187, 1955.

\bibitem{Jiang99}
Xinhui Jiang and Hongbing Su.
\newblock On a simple unital projectionless {$C^*$}-algebra.
\newblock {\em Amer. J. Math.}, 121(2):359--413, 1999.

\bibitem{Kaniuth06}
Eberhard Kaniuth.
\newblock Induced characters, {M}ackey analysis and primitive ideal spaces of
  nilpotent discrete groups.
\newblock {\em J. Funct. Anal.}, 240(2):349--372, 2006.

\bibitem{Kirchberg04}
Eberhard Kirchberg and Wilhelm Winter.
\newblock Covering dimension and quasidiagonality.
\newblock {\em Internat. J. Math.}, 15(1):63--85, 2004.

\bibitem{Kishimoto98}
A.~Kishimoto.
\newblock Unbounded derivations in {$AT$} algebras.
\newblock {\em J. Funct. Anal.}, 160(1):270--311, 1998.

\bibitem{Lance73}
Christopher Lance.
\newblock On nuclear {$C\sp{\ast} $}-algebras.
\newblock {\em J. Functional Analysis}, 12:157--176, 1973.

\bibitem{Lin08}
Huaxin Lin and Zhuang Niu.
\newblock Lifting {$KK$}-elements, asymptotic unitary equivalence and
  classification of simple {$C^\ast$}-algebras.
\newblock {\em Adv. Math.}, 219(5):1729--1769, 2008.

\bibitem{LinQ96}
Qing Lin.
\newblock Cut-down method in the inductive limit decomposition of
  non-commutative tori. {III}. {A} complete answer in {$3$}-dimension.
\newblock {\em Comm. Math. Phys.}, 179(3):555--575, 1996.

\bibitem{Malcev49}
A.~I. Mal'cev.
\newblock Nilpotent torsion-free groups.
\newblock {\em Izvestiya Akad. Nauk. SSSR. Ser. Mat.}, 13:201--212, 1949.

\bibitem{Matui12}
Hiroki Matui and Yasuhiko Sato.
\newblock Strict comparison and {$\mathcal{Z}$}-absorption of nuclear
  {$C^*$}-algebras.
\newblock {\em Acta Math.}, 209(1):179--196, 2012.

\bibitem{Matui14a}
Hiroki Matui and Yasuhiko Sato.
\newblock Decomposition rank of {UHF}-absorbing {C}*-algebras.
\newblock {\em Duke Math. J.}, 163(14):2687--2708, 2014.

\bibitem{Matui14}
Hiroki Matui and Yasuhiko Sato.
\newblock $\mathcal{Z}$-stability of crossed products by strongly outer actions
  {II}.
\newblock {\em Amer. J. Math.}, 136(6):1441--1496, 2014.

\bibitem{Moore76}
Calvin~C. Moore and Jonathan Rosenberg.
\newblock Groups with {$T_{1}$} primitive ideal spaces.
\newblock {\em J. Functional Analysis}, 22(3):204--224, 1976.

\bibitem{Murphy90}
Gerard~J. Murphy.
\newblock {\em {$C^*$}-algebras and operator theory}.
\newblock Academic Press Inc., Boston, MA, 1990.

\bibitem{Packer92}
Judith~A. Packer and Iain Raeburn.
\newblock On the structure of twisted group {$C^*$}-algebras.
\newblock {\em Trans. Amer. Math. Soc.}, 334(2):685--718, 1992.

\bibitem{Phillips06}
N.~Chrsitopher Phillips.
\newblock Every simple higher dimensional noncommutative torus is an {A}{T}
  algebra.
\newblock arXiv:math/0609783 [math.OA], 2006.

\bibitem{Rordam02}
M.~R{\o}rdam.
\newblock Classification of nuclear, simple {$C\sp *$}-algebras.
\newblock In {\em Classification of nuclear $C\sp *$-algebras. Entropy in
  operator algebras}, volume 126 of {\em Encyclopaedia Math. Sci.}, pages
  1--145. Springer, Berlin, 2002.

\bibitem{Rordam04}
Mikael R{\o}rdam.
\newblock The stable and the real rank of {$\mathcal Z$}-absorbing
  {$C^*$}-algebras.
\newblock {\em Internat. J. Math.}, 15(10):1065--1084, 2004.

\bibitem{Rosenberg79}
Jonathan Rosenberg.
\newblock Appendix to: ``{C}rossed products of {UHF} algebras by product type
  actions'' [{D}uke {M}ath. {J}. {\bf 46} (1979), no. 1, 1--23;\ {MR} 82a:46063
  above]\ by {O}. {B}ratteli.
\newblock {\em Duke Math. J.}, 46(1):25--26, 1979.

\bibitem{Rosenberg87}
Jonathan Rosenberg and Claude Schochet.
\newblock The {K}\"unneth theorem and the universal coefficient theorem for
  {K}asparov's generalized {$K$}-functor.
\newblock {\em Duke Math. J.}, 55(2):431--474, 1987.

\bibitem{Segal83}
Daniel Segal.
\newblock {\em Polycyclic groups}, volume~82 of {\em Cambridge Tracts in
  Mathematics}.
\newblock Cambridge University Press, Cambridge, 1983.

\bibitem{Thoma64}
Elmar Thoma.
\newblock \"{U}ber unit\"are {D}arstellungen abz\"ahlbarer, diskreter
  {G}ruppen.
\newblock {\em Math. Ann.}, 153:111--138, 1964.

\bibitem{Tikuisis14}
Aaron Tikuisis and Wilhelm Winter.
\newblock Decomposition rank of $\mathcal{Z}$-stable {C{$*$}}-algebras.
\newblock {\em Anal. PDE}, 7(3):673--700, 2014.

\bibitem{Winter12}
Wilhelm Winter.
\newblock Nuclear dimension and {$\mathcal{Z}$}-stability of pure {$\rm
  C^*$}-algebras.
\newblock {\em Invent. Math.}, 187(2):259--342, 2012.

\bibitem{Winter10}
Wilhelm Winter and Joachim Zacharias.
\newblock The nuclear dimension of {$C^\ast$}-algebras.
\newblock {\em Adv. Math.}, 224(2):461--498, 2010.

\end{thebibliography}

\end{document}